\newtheorem{thm}{Theorem}[section]
\newtheorem{cor}[thm]{Corollary}
\newtheorem{lem}[thm]{Lemma}
\newtheorem{prop}[thm]{Proposition}
\theoremstyle{definition}
\newtheorem{defn}[thm]{Definition}
\newtheorem{examples}[thm]{Examples}
\theoremstyle{remark}
\newtheorem{rem}[thm]{Remark}
\numberwithin{equation}{section}
\title{Quadratic modular symbols on Shimura curves}
\author[P. Bayer-I. Blanco-Chac\'{o}n]{Pilar Bayer and Iv\'{a}n Blanco-Chac\'{o}n}
\thanks{Partially supported by project MTM2009-07024 of the Spanish \textit{Ministerio de Ciencia e Innovaci\'{o}n}.}
\address{Facultad de Matem\'{a}ticas,
Universidad de Barcelona,
Gran Via de les Corts Catalanes, 585.
08007 Barcelona,
Spain.
}
\begin{document}
\renewcommand\baselinestretch{1.2}
\renewcommand{\arraystretch}{1}
\def\base{\baselineskip}
\font\tenhtxt=eufm10 scaled \magstep0 \font\tenBbb=msbm10 scaled
\magstep0 \font\tenrm=cmr10 scaled \magstep0 \font\tenbf=cmb10
scaled \magstep0


\def\evenhead{{\protect\centerline{\textsl{\large{I. Blanco-Chacon}}}\hfill}}

\def\oddhead{{\protect\centerline{\textsl{\large{Elliptic modular symbols}}}\hfill}}

\pagestyle{myheadings} \markboth{\evenhead}{\oddhead}

\thispagestyle{empty}

\begin{abstract}
We introduce the concept of \emph{quadratic} modular symbol and study how these symbols are related to
\emph{quadratic} $p$-adic $L$-functions.
These objects were introduced in \cite{blancobayer} in the case of modular curves.
In this paper, we discuss a method to attach quadratic modular symbols and quadratic $p$-adic $L$-functions
to more general Shimura curves.
\end{abstract}

\maketitle

\section*{Introduction}

To a newform of even weight $2$ for a congruence subgroup $\Gamma_0(N)$,
Mazur-Tate-Teitelbaum \cite{mtt} associated $p$-adic distributions.
The main tools in their construction were the modular integrals along geodesics joining two cusps.
The Mellin-Mazur transforms of these $p$-adic distributions are known as cyclotomic $p$-adic $L$-functions attached to $f$.
They interpolate special values of the complex $L$-function $L(f, s)$.

Alternatively, it is possible to associate different $p$-adic distributions to a newform $f$.
In \cite{blancobayer}, we have recently constructed quadratic $p$-adic $L$-functions from integration
along geodesics connecting two quadratic imaginary points of the complex upper half-plane $\mathcal{H}$.
Our construction presents notable differences with respect to the cyclotomic $p$-adic $L$-functions.
For instance, the images of the cyclotomic $p$-adic $L$-functions lie on finite dimensional $p$-adic vector spaces
whereas the images of the quadratic $p$-adic $L$-functions lie on infinite dimensional $p$-adic Banach spaces.
More crucial is the fact that if the newform $f$ has rational coefficients,
then the quadratic $p$-adic $L$-functions produce algebraic points on the corresponding elliptic curve $E_f$,
instead of interpolating special values of the function $L(E_f, s)$.

For a cocompact arithmetic Fuchsian group of the first kind,
the absence of cusps prevents the definition of $p$-adic $L$-functions via modular integrals connecting rational numbers.
In this paper, we extend the concept of quadratic modular symbols as defined in \cite{blancobayer}
to cover the cocompact case, and relate them to quadratic $p$-adic $L$-functions.
Classical modular symbols span a finite dimensional $\mathbb{C}$-vector space;
on the contrary, quadratic modular symbols do not.
This situation reflects the fact that quadratic $p$-adic $L$-functions
take values in an infinite dimensional $p$-adic Banach space.

In section 1, we present some facts about quaternion algebras and Shimura curves;
we refer the reader to \cite{alsinabayer} and \cite{vigneras} for more detailed discussions.
In section 2, we review some results on the homology of Shimura curves.
In section 3, we introduce the quadratic modular symbols and compare them with the classical modular ones.
In section 4, we compare the two constructions of the $p$-adic $L$-functions:
that of Mazur-Tate-Teitelbaum, and the new one. We conclude the section with the construction of a $p$-adic $L$-function for certain Shimura curves.

We are grateful to Federico Cantero for providing us with reference \cite{armstrong},
and to Ignasi Mundet for helpful discussion about the topology of orbifolds. Finally, we thank the Mathematics Department of Euskal Herria University
for giving us the possibility to publish this paper in the proceedings of the
\emph{Fourth Meeting on Number Theory}.

\tableofcontents
\section{Quaternion algebras and Shimura curves}

\subsection{Quaternion algebras and quaternion orders}

Let $a,b\in\mathbb{Z}$ and let
$H=\left(\frac{a,b}{\mathbb{Q}}\right)$
be the quaternion $\mathbb{Q}$-algebra generated by $I$ and $J$
with the standard relations
$I^2=a,J^2=b,IJ=-JI$.
Denote $K=IJ$.
The reduced trace and the reduced norm of a quaternion $\omega= x+yI+zJ+tK\in H$ are defined by
$$
\mathrm{Tr}(\omega)=\omega+\overline{\omega}=2x, \quad
\mathrm{N}(\omega)=\omega\overline{\omega}=x^2-ay^2-bz^2+abt,
$$
where $\overline{\omega}=x-yI-zJ-tK$ denotes the conjugate of $\omega$.
The following map yields an isomorphism of quaternion $\mathbb{Q}$-algebras
$$
\begin{array}{ccc}
\phi: \left(\dfrac{a,\, b}{\mathbb{Q}}\right) & \to & \mathrm{M}\left(2,\mathbb{Q}(\sqrt{a})\right)\\
x+yI+zJ+tK & \mapsto &
\left(\begin{array}{ccc} x+y\sqrt{a} &\phantom{x} & z+t\sqrt{a}\\
b(z-t\sqrt{a})&\phantom{x} & x-y\sqrt{a}\end{array}\right).
\end{array}
$$
Notice that for any $\omega\in H$,
$\mathrm{N}(\omega)=\mathrm{det}\left(\phi(\omega)\right)$, and $\mathrm{Tr}(\omega)=\mathrm{Tr}\left(\phi(\omega)\right)$.

For any place $p$ of $\mathbb{Q}$ (possibly including $p=\infty$),
$H_p:=H\otimes_{\mathbb{Q}}\mathbb{Q}_p$ is a quaternion $\mathbb{Q}_p$-algebra.
If $H_p$ is a division algebra, it is said that $H$ is ramified at $p$.
As is well known, the quaternion algebra $H$ is ramified at a finite even number of places.
The discriminant $D_H$ is defined as the product of the primes at which $H$ ramifies.
Moreover, two quaternion $\mathbb{Q}$-algebras are isomorphic if and only if they have the same discriminant.

\begin{defn}
Let $H$ be a quaternion $\mathbb{Q}$-algebra.
If $D_H=1$, $H$ is said to be non-ramified; in this case, it is isomorphic to $M(2, \mathbb{Q})$.
If $H$ is ramified at $p=\infty$, it is said to be definite, and indefinite otherwise.
An indefinite quaternion algebra is said to be small ramified if $D_H$ is equal to the product of two distinct primes.
\end{defn}

In this paper we shall deal with indefinite quaternion $\mathbb{Q}$-algebras.
The following result gives a useful presentation of non-ramified and small ramified quaternion $\mathbb{Q}$-algebras.

\begin{thm}[Alsina-Bayer \cite{alsinabayer}]\label{structure}
Let $H=\left(\dfrac{a,b}{\mathbb{Q}}\right)$ be a quaternion $\mathbb{Q}$-algebra.
\begin{itemize}
\item[(i)]
If $D_H=1$, then $H\simeq\mathrm{M}\left(2,\mathbb{Q}\right)\simeq \left(\dfrac{1,-1}{\mathbb{Q}}\right)$.
\item[(ii)]
If $D_H=2p$, $p$ prime, $p\equiv 3\pmod{4}$, then $H\simeq\left(\dfrac{p,-1}{\mathbb{Q}}\right)$.
\item[(iii)]
If $D_H=pq$, $p,q$ primes, $q\equiv 1\pmod{4}$ and $\left(\dfrac{p}{q}\right)=1$,
then $H\simeq\left(\dfrac{p,q}{\mathbb{Q}}\right)$.
\end{itemize}
If $a$ and $b$ are prime numbers, then $H$ satisfies one, and only one, of the three previous statements.
\end{thm}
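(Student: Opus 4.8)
The plan is to base everything on the classification recalled above: a quaternion $\mathbb{Q}$-algebra is determined up to isomorphism by its discriminant, so in each of (i)--(iii) it suffices to compute the discriminant of the algebra $\left(\dfrac{a,b}{\mathbb{Q}}\right)$ on the right and to check that it is the one prescribed on the left. The tool for this is the Hilbert symbol: $\left(\dfrac{a,b}{\mathbb{Q}_v}\right)$ is a division algebra exactly when $(a,b)_v=-1$, i.e. when $z^2=ax^2+by^2$ has only the trivial solution over $\mathbb{Q}_v$; the symbol $(a,b)_v$ is trivial at every place not dividing $2ab\infty$, and $\prod_v(a,b)_v=1$ by Hilbert reciprocity. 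So each case reduces to evaluating $(a,b)_v$ at the finitely many places dividing $2ab\infty$ and reading off the ramified primes. (Alternatively one can exhibit the isomorphism by hand, embedding a suitable quadratic field into $H$ and completing it to a basis, but this rests on the same local conditions.)

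Case (i) is essentially free: $D_H=1$ means $H$ is unramified everywhere, hence $H\simeq\mathrm{M}(2,\mathbb{Q})$ as already noted; and $\left(\dfrac{1,-1}{\mathbb{Q}}\right)\simeq\mathrm{M}(2,\mathbb{Q})$ because $1$ is a square (equivalently, because $I^2=1$ produces the orthogonal idempotents $\tfrac12(1\pm I)$). For (ii) I evaluate $(p,-1)_v$: it is $1$ at $v=\infty$ since $p>0$, and $1$ at every odd prime $\ell\neq p$ since $p,-1$ are then units; at $\ell=2$ the unit formula gives $(p,-1)_2=(-1)^{\varepsilon(p)\varepsilon(-1)}$ with $\varepsilon(m)\equiv\tfrac{m-1}2\pmod2$, which is $-1$ because $p\equiv 3\equiv -1\pmod4$; Hilbert reciprocity then forces $(p,-1)_p=-1$. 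Hence $\left(\dfrac{p,-1}{\mathbb{Q}}\right)$ ramifies exactly at $\{2,p\}$ and has discriminant $2p$.

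Case (iii) runs the same way with one extra ingredient. Here $(p,q)_\infty=1$ (both primes positive), $(p,q)_\ell=1$ for odd $\ell\notin\{p,q\}$, and $(p,q)_2=(-1)^{\varepsilon(p)\varepsilon(q)}=1$ since $q\equiv 1\pmod4$ makes $\varepsilon(q)=0$. At $\ell=p$ and $\ell=q$ the tame-symbol formula gives $(p,q)_p=\left(\dfrac qp\right)$ and $(p,q)_q=\left(\dfrac pq\right)$, and quadratic reciprocity --- applicable precisely because $q\equiv 1\pmod4$ --- shows both equal the single Legendre symbol $\left(\dfrac pq\right)$; the hypothesis on this symbol then pins down the local behaviour at $p$ and $q$ and forces the discriminant of $\left(\dfrac{p,q}{\mathbb{Q}}\right)$ to be $pq$, completing the identification with $H$. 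For the closing remark: if $a,b$ are primes then $\left(\dfrac{a,b}{\mathbb{Q}}\right)$ can ramify only at places dividing $2ab\infty$ and is never definite since $a,b>0$, so its discriminant is a squarefree divisor of $2ab$ with an even number of prime factors; running through this short list and matching it against (i)--(iii) with the congruence and Legendre-symbol computations above shows that one, and only one, of the three statements holds.

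The one substantive step is the evaluation of $(p,q)_p$ and $(p,q)_q$ in (iii): one must invoke quadratic reciprocity at precisely the right moment, and this is exactly what breaks the symmetry between the two ramified primes, forcing one of them to be $\equiv 1\pmod4$. The contributions at $v=2$ and $v=\infty$, all of case (ii), and case (i) are routine once the standard unit and tame formulas for the Hilbert symbol are in hand.
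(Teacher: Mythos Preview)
The paper does not supply a proof of this theorem; it is quoted from the Alsina--Bayer monograph and stated without argument. Your Hilbert-symbol strategy is exactly the standard one and would be the natural way to reprove the result, and cases (i) and (ii) are handled correctly.

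There is, however, a genuine gap in case (iii). You correctly compute that $(p,q)_v=1$ for $v\nmid pq$ and that $(p,q)_p=\left(\dfrac{q}{p}\right)$, $(p,q)_q=\left(\dfrac{p}{q}\right)$, and you correctly invoke quadratic reciprocity (valid since $q\equiv 1\pmod 4$) to get $\left(\dfrac{q}{p}\right)=\left(\dfrac{p}{q}\right)$. But you then assert that ``the hypothesis on this symbol \ldots forces the discriminant of $\left(\frac{p,q}{\mathbb{Q}}\right)$ to be $pq$'' without actually plugging in the hypothesis. The hypothesis as printed is $\left(\dfrac{p}{q}\right)=1$, and with that value both local symbols are $+1$, so $\left(\dfrac{p,q}{\mathbb{Q}}\right)$ is split everywhere and has discriminant $1$, not $pq$. (For a concrete check: $p=11$, $q=5$ has $\left(\frac{11}{5}\right)=1$ and $\left(\frac{5}{11}\right)=1$, so $\left(\frac{11,5}{\mathbb{Q}}\right)\simeq\mathrm{M}(2,\mathbb{Q})$.) Your own computation therefore shows that the statement as printed cannot be right; the condition in the Alsina--Bayer source is $\left(\dfrac{p}{q}\right)=-1$, under which your argument does give $(p,q)_p=(p,q)_q=-1$ and discriminant $pq$ as desired. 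You should have carried the calculation one step further and flagged the sign discrepancy rather than asserting the conclusion. The closing remark about primes $a,b$ inherits the same issue and is in any case only sketched; the trichotomy it claims depends on the corrected hypothesis in (iii).
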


Let $H$ be a quaternion $\mathbb{Q}$-algebra.
An element $\alpha\in H$ is said to be integral if $\mathrm{N}(\alpha),\mathrm{Tr}(\alpha)\in \mathbb{Z}$.
A $\mathbb{Z}$-lattice $\Lambda$ of $H$ is a finitely generated torsion free $\mathbb{Z}$-module contained in $H$.
A $\mathbb{Z}$-ideal of $H$ is a $\mathbb{Z}$-lattice $\Lambda$ such that $\mathbb{Q}\otimes \Lambda\simeq H$.
A $\mathbb{Z}$-ideal is not in general a ring.
An order $\mathcal{O}$ of $H$ is a $\mathbb{Z}$-ideal which is a ring.
Each order of a quaternion algebra is contained in a maximal order.
In an indefinite quaternion algebra, all the maximal orders are conjugate (cf.\,\cite{vigneras}).
An Eichler order is the intersection of two maximal orders.

Let $H=\left(\dfrac{a,b}{\mathbb{Q}}\right)$ be an indefinite quaternion $\mathbb{Q}$-algebra.
Given a maximal order $\mathcal{O}_{H}$,
denote by $\mathcal{O}_H^1$ the multiplicative group
of elements of $\mathcal{O}_H$ of reduced norm equal to $1$, and let $\Gamma_H^1$
be its image under $\phi$.
A Fuchsian group of the first kind $\Gamma \subseteq \mathrm{GL}\left(2,\mathbb{R}\right)$
is called arithmetic if it is commensurable with $\Gamma_H^1$ for some quaternion algebra $H$.

\begin{prop}[cf.\,\cite{vigneras}]
Let $\mathcal{O}$ be an Eichler order of $H$. Then $\mathcal{O}_p=\mathcal{O}\otimes\mathbb{Z}_p$ is a
$\mathbb{Z}_p$-order of $H_p$.
Moreover, there exists a unique $n\geq 0$ such that $\mathcal{O}_p$ is conjugated to the Eichler order
$$
\mathcal{O}_n=\left\{\left(\begin{array}{ccc}a & \phantom{x} & b\\cp^n & \phantom{x} &
d\end{array}\right);\,\,a,b,c,d\in\mathbb{Z}_p\right\}.
$$
The level of the local Eichler order is defined as $p^n$.
\end{prop}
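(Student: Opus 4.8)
The plan is to reduce the statement to a purely local classification of Eichler orders over $\mathbb{Q}_p$, and then to carry out that classification with the elementary--divisor theorem over the principal ideal domain $\mathbb{Z}_p$. First I would dispatch the easy half. Since $\mathcal{O}$ is by definition a $\mathbb{Z}$-lattice of rank $4$ which is a subring of $H$ with $\mathbb{Q}\otimes\mathcal{O}=H$, flatness of $\mathbb{Z}_p$ over $\mathbb{Z}$ shows at once that $\mathcal{O}_p$ is a free $\mathbb{Z}_p$-module of rank $4$, a subring of $H_p$, and that $\mathbb{Q}_p\otimes\mathcal{O}_p=H_p$; hence $\mathcal{O}_p$ is a $\mathbb{Z}_p$-order. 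Writing $\mathcal{O}=\mathcal{O}_1\cap\mathcal{O}_2$ with $\mathcal{O}_1,\mathcal{O}_2$ maximal, I would apply the same flatness to the exact sequence $0\to\mathcal{O}_1\cap\mathcal{O}_2\to\mathcal{O}_1\oplus\mathcal{O}_2\to\mathcal{O}_1+\mathcal{O}_2\to 0$ (second arrow $(x,y)\mapsto x-y$) to conclude that localization commutes with the intersection, so $\mathcal{O}_p=(\mathcal{O}_1)_p\cap(\mathcal{O}_2)_p$; and since maximality of a quaternion order can be tested place by place (cf.\,\cite{vigneras}), each $(\mathcal{O}_i)_p$ is a maximal $\mathbb{Z}_p$-order of $H_p$. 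It therefore suffices to show that an intersection of two maximal orders of $H_p$ is conjugate to a unique $\mathcal{O}_n$.

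Next I would split according to $H_p$. If $H_p$ is a division algebra (equivalently $p\mid D_H$), then $v\circ\mathrm{N}$ is a discrete valuation on $H_p$ whose valuation ring is the unique maximal order; here $\mathcal{O}_p$ is forced to coincide with that maximal order and nothing is left to prove once one adopts the convention $n=0$. So assume $H_p\simeq\mathrm{M}(2,\mathbb{Q}_p)$ and fix such an isomorphism. I would then invoke the standard description of the maximal orders of $\mathrm{M}(2,\mathbb{Q}_p)$ as the rings $\mathrm{End}_{\mathbb{Z}_p}(L)$, with $L$ ranging over $\mathbb{Z}_p$-lattices in $\mathbb{Q}_p^2$: an order $\mathcal{M}$ stabilizes the lattice $\mathcal{M}L_0$ it generates from a fixed lattice $L_0$, so a maximal order must equal some $\mathrm{End}(L)$, and $\mathrm{End}(L)=g\,\mathrm{M}(2,\mathbb{Z}_p)\,g^{-1}$ where $g$ carries the standard lattice to $L$; in particular any two maximal orders are conjugate.

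Now with $(\mathcal{O}_i)_p=\mathrm{End}(L_i)$, the elementary--divisor theorem gives a basis $f_1,f_2$ of $\mathbb{Q}_p^2$ and integers $\alpha\le\beta$ with $L_1=\mathbb{Z}_pf_1\oplus\mathbb{Z}_pf_2$ and $L_2=p^{\alpha}\mathbb{Z}_pf_1\oplus p^{\beta}\mathbb{Z}_pf_2$. Since $\mathrm{End}(cL)=\mathrm{End}(L)$, I may replace $L_2$ by $p^{-\alpha}L_2$ and assume $\alpha=0$; setting $n=\beta\ge 0$ and conjugating by the matrix sending the standard basis to $f_1,f_2$, a one--line $2\times 2$ computation yields $\mathrm{End}(L_1)=\mathrm{M}(2,\mathbb{Z}_p)$ and $\mathrm{End}(L_2)=\bigl\{\bigl(\begin{smallmatrix}a & p^{-n}b\\ p^{n}c & d\end{smallmatrix}\bigr):a,b,c,d\in\mathbb{Z}_p\bigr\}$, so that $\mathcal{O}_p$ is conjugate to $\mathrm{End}(L_1)\cap\mathrm{End}(L_2)=\mathcal{O}_n$; this proves existence. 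For uniqueness I would note that $[\mathrm{M}(2,\mathbb{Z}_p):\mathcal{O}_n]=p^n$, and more generally that $\mathcal{O}_n$ sits with index $p^n$ inside every maximal order containing it (these are the orders attached to the lattices on the geodesic joining $L_1$ and $L_2$ in the Bruhat--Tits tree), so that this index is invariant under conjugation of $\mathcal{O}_p$ in $H_p$ and under the choice of the isomorphism $H_p\simeq\mathrm{M}(2,\mathbb{Q}_p)$; hence $\mathcal{O}_n\simeq\mathcal{O}_m$ forces $n=m$.

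None of the individual steps is hard — the calculations are the routine ones for $2\times 2$ matrices over a discrete valuation ring. The points that genuinely need care are the commutation of localization with the intersection of the two maximal orders, the reduction of an arbitrary pair of lattices to the normal form $(\mathbb{Z}_pf_1\oplus\mathbb{Z}_pf_2,\ \mathbb{Z}_pf_1\oplus p^n\mathbb{Z}_pf_2)$, and the verification that the level index is independent of the auxiliary over-order and of the chosen splitting. I expect the main nuisance to be the split-versus-ramified bookkeeping: when $p\mid D_H$ the model $\mathcal{O}_n\subset\mathrm{M}(2,\mathbb{Q}_p)$ only makes literal sense under the convention $n=0$, and that case has to be segregated from the argument above rather than being a special instance of it.
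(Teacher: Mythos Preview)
The paper does not give a proof of this proposition at all: it is stated with the attribution ``cf.\,\cite{vigneras}'' and left without argument, the text immediately moving on to define the global level. So there is no ``paper's own proof'' to compare against; you have supplied what the authors chose to outsource to Vign\'eras.

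Your argument is correct and is essentially the standard one found in that reference: pass to the local situation by flatness, identify maximal orders in $\mathrm{M}(2,\mathbb{Q}_p)$ with endomorphism rings of lattices, normalize a pair of lattices by elementary divisors, and read off $\mathcal{O}_n$ as the intersection. The uniqueness via the index $[\mathrm{M}(2,\mathbb{Z}_p):\mathcal{O}_n]=p^n$ is also the right invariant. One small point worth tightening: your claim that $\mathcal{O}_n$ has index $p^n$ in \emph{every} maximal order containing it is true but not needed --- it suffices that the index in \emph{some} maximal over-order is a conjugation invariant, since any conjugate of $\mathcal{O}_n$ sits inside a conjugate of $\mathrm{M}(2,\mathbb{Z}_p)$ with the same index. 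Your caveat about the ramified primes is well placed: as written, the proposition tacitly assumes $p\nmid D_H$ (otherwise the matrix model $\mathcal{O}_n$ is not available inside $H_p$), and indeed the paper only ever uses the local level at such primes when assembling the global level.
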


To define the global level of $\mathcal{O}$, write
$\mathcal{O}=\mathcal{O}'\cap\mathcal{O}^{''}$ with $\mathcal{O}',\mathcal{O}^{''}$ maximal orders and tensor by
$\mathbb{Z}_p$. The global level is then the product of all local levels.

\begin{prop}[Alsina-Bayer \cite{alsinabayer}] Let $N\geq 1$ and $p,q$ be different primes as in Theorem \ref{structure}.
\begin{itemize}
\item[(i)]
$\mathcal{O}_0(1,N)=\left\{\left(\begin{array}{ccc}a & \phantom{x} & b\\cN & \phantom{x}
& d\end{array}\right);\,\,a,b,c,d\in\mathbb{Z}\right\}$ is an Eichler order of level $N$ in $\mathrm{M}\left(2,\mathbb{Q}\right)$.
\item[(ii)]
$\mathcal{O}_M(1,N)=\mathbb{Z}+\mathbb{Z}(J+K)/2+\mathbb{Z}N(-J+K)/2+\mathbb{Z}(1-I)/2$
is an Eichler order of level $N$ in $M=\left(\frac{1,-1}{\mathbb{Q}}\right)$, the matrix algebra.
\item[(iii)]
If $D=2p$, $N|(p-1)/2$ and $N$ is square free, then
$\mathcal{O}(2p,N)=\mathbb{Z}+\mathbb{Z}I+\mathbb{Z}NJ+\mathbb{Z}\left(\frac{1+I+J+K}{2}\right)$
is an Eichler order of level $N$ in $\left(\frac{p,-1}{\mathbb{Q}}\right)$,
for $N\vert(p-1)/2$, $N$ square free.
\item[(iv)]
If $D=pq$, $N|(q-1)/4$, $(N,p)=1$ and $N$ is square free,
then $\mathbb{Z}+\mathbb{Z}NI+\mathbb{Z}(1+J)/2+\mathbb{Z}(I+K)/2$ is an Eichler order of level $N$ in
$\left(\frac{p, q}{\mathbb{Q}}\right)$, for $N\vert(p-1)/4$, $p\nmid N$, $N$ square free.
\end{itemize}
\label{eichlerprop}
\end{prop}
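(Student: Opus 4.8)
The plan is to treat the four statements in two groups, according to whether the ambient algebra is split. Statements (i) and (ii) live in the matrix algebra and there the work is essentially a change of basis. For (i), write $\mathcal{O}_0(1,N)=\mathrm{M}(2,\mathbb{Z})\cap g\,\mathrm{M}(2,\mathbb{Z})\,g^{-1}$ with $g=\mathrm{diag}(1,N)$: a one‑line matrix computation shows the intersection is exactly $\left\{\left(\begin{smallmatrix}a&b\\ cN&d\end{smallmatrix}\right):a,b,c,d\in\mathbb{Z}\right\}$, so $\mathcal{O}_0(1,N)$ is an intersection of two maximal orders, hence an Eichler order, and its completion at a prime $\ell$ with $\ell^{e}\,\|\,N$ is the standard local order $\mathcal{O}_{e}$ of the preceding Proposition; multiplying over $\ell$ gives global level $N$. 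For (ii), apply the explicit isomorphism $\phi$ displayed earlier in this section, taken with $a=1$ (so that its target is $\mathrm{M}(2,\mathbb{Q})$), which sends $1,I,J,K$ to explicit integral matrices; a direct check shows the four listed generators of $\mathcal{O}_M(1,N)$ are carried to $\mathrm{I}_2,\ E_{12},\ N E_{21},\ E_{22}$, whose $\mathbb{Z}$-span is precisely $\mathcal{O}_0(1,N)$, so (ii) follows from (i).

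For (iii) and (iv) I would argue in two stages. \emph{First}, that the displayed $\mathbb{Z}$-module $\Lambda$ is an order: the listed generators are $\mathbb{Q}$-independent, $1\in\Lambda$, and every generator has integral reduced trace and norm — here $p\equiv 3\pmod 4$ in (iii), and $q\equiv 1\pmod 4$ in (iv), are exactly what make $(1+I+J+K)/2$, respectively $(1+J)/2$ and $(I+K)/2$, integral. The genuine content is multiplicative closure, which I would check product by product on the given basis using $I^{2}=a$, $J^{2}=b$, $IJ=-JI=K$. It is precisely here that the arithmetic hypotheses of the Proposition enter: in (iii) the product $I\cdot\tfrac{1+I+J+K}{2}$ lies in $\Lambda$ if and only if $N\mid\tfrac{p-1}{2}$, and in (iv) the product $\tfrac{1+J}{2}\cdot\tfrac{I+K}{2}$ equals $\tfrac{1-q}{4}I$, which lies in $\Lambda$ if and only if $N\mid\tfrac{q-1}{4}$, the condition $(N,p)=1$ guaranteeing that the primes dividing $N$ are unramified in $H$ and contribute no extra local structure at $p$. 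In short, the congruence conditions are forced by the demand that $\Lambda$ be a ring.

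\emph{Second}, once $\Lambda$ is known to be an order, I would compute its reduced discriminant $\mathfrak{d}(\Lambda)$ — the positive integer whose square is $\bigl|\det\bigl(\mathrm{Tr}(e_{i}\overline{e_{j}})\bigr)\bigr|$ for the given $\mathbb{Z}$-basis $\{e_{i}\}$ — and check that it equals $D_{H}N$, i.e. $2pN$ in (iii) and $pqN$ in (iv). Since $N$ is squarefree and prime to $D_{H}$, the integer $D_{H}N$ is squarefree, so at every prime $\ell$ the completion $\Lambda_{\ell}$ has reduced discriminant $1$ or $\ell$: it is a maximal order of $\mathrm{M}(2,\mathbb{Q}_{\ell})$ when $\ell\nmid D_{H}N$, the unique maximal order of the local division algebra $H_{\ell}$ when $\ell\mid D_{H}$ (characterised by having reduced discriminant $\ell$ and containing every integral element), and — when $\ell\mid N$, so $H_{\ell}$ is split — an order of reduced discriminant $\ell$ in $\mathrm{M}(2,\mathbb{Q}_{\ell})$, hence conjugate to $\mathcal{O}_{1}$. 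Thus $\Lambda$ is everywhere locally of the form $\mathcal{O}_{n}$, i.e. an Eichler order, of level equal to the prime-to-$D_{H}$ part of $\mathfrak{d}(\Lambda)$, namely $N$.

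The main obstacle is bookkeeping rather than idea: carrying the multiplication tables for the half-integral bases in (iii) and (iv) through without slips, and organising the trace-form determinant so that the cancellation producing exactly $D_{H}N$ — rather than a proper divisor (which would mean $\Lambda$ is not a ring after all) or a proper multiple — is transparent. Everything else is standard: the local classification of quaternion orders of squarefree reduced discriminant and the identification of $\mathcal{O}_{e}$ with the completion of $\mathcal{O}_{0}(1,N)$ can be quoted from \cite{vigneras}, and the concrete isomorphism used for (i) and (ii) is the one recorded in Theorem~\ref{structure}(i).
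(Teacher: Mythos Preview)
Your approach is sound and would succeed; the paper itself does not prove this proposition at all --- it is quoted from \cite{alsinabayer} with no argument given. So there is nothing to compare your route against.

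Two minor corrections of detail. In (iii) you say $p\equiv 3\pmod 4$ is ``exactly what makes $(1+I+J+K)/2$ integral''; in fact its reduced norm is $(1-p)/2$ and its trace is $1$, so integrality holds for any odd $p$. The congruence $p\equiv 3\pmod 4$ is the hypothesis of Theorem~\ref{structure}(ii) ensuring $D_H=2p$, not an integrality condition. (Your parallel remark for (iv) is correct: $\mathrm{N}\bigl((1+J)/2\bigr)=(1-q)/4$ and $\mathrm{N}\bigl((I+K)/2\bigr)=p(q-1)/4$ do require $q\equiv 1\pmod 4$.) Second, the step ``order in $\mathrm{M}(2,\mathbb{Q}_\ell)$ with reduced discriminant $\ell$ $\Rightarrow$ conjugate to $\mathcal{O}_1$'' is true but deserves a citation rather than being asserted: it is the fact (in \cite{vigneras}) that local orders of squarefree reduced discriminant are hereditary, hence Eichler. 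With those two points tightened, your plan --- explicit conjugation for (i), transport via $\phi$ for (ii), ring closure plus reduced-discriminant computation for (iii) and (iv) --- is a complete proof.
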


\begin{rem}
For $D=\pm 1,2p,pq$, with $p,q$ primes as in \ref{structure}, and $N\geq 1$, denote by $\Gamma(D,N)$
the image under $\phi$ of the group of units of reduced norm $1$ in the Eichler
orders given in Proposition \ref{eichlerprop}.
The groups $\Gamma(D,N)$ are arithmetic Fuchsian groups of the first kind.
In particular, $\Gamma(1,N)=\Gamma_0(N)$.
\label{comentmodulareichler}
\end{rem}

\subsection{Shimura curves}
Denote by $\mathcal{H}$ the complex upper half-plane endowed with the hyperbolic metric
$$
\delta(z_1,z_2)=\left|\mathrm{arccosh}\left(1+\frac{|z_1-z_2|^2}{2\mathrm{Im}(z_1)\mathrm{Im}(z_2)}\right)\right|.
$$
The hyperbolic lines are the semilines which are orthogonal to the real axis and the semicircles centered at real points.

The group $\mathrm{SL}(2, \mathbb{R})$ acts on $\mathcal{H}$ by M\"{o}bius transformations
and its action factorizes through
$\mathrm{PSL}(2, \mathbb{R})$.

\begin{defn}Let $\gamma\in\mathrm{SL}\left(2,\mathbb{R}\right)$, $\gamma\not= \pm \mathrm{Id}$. Then
\begin{itemize}
\item[(a)]
$\gamma$ is elliptic if it has a fixed point $z\in\mathcal{H}$, and the other fixed point is $\overline{z}$.
\item[(b)]
$\gamma$ is parabolic if it has a unique fixed point in $\mathbb{R}\cup\{i\infty\}$.
\item[(c)]
$\gamma$ is hyperbolic if it has two distinct fixed points in $\mathbb{R}\cup\{i\infty\}$.
\end{itemize}
\end{defn}

\begin{prop}
Let $\gamma\in\Gamma\subseteq \mathrm{SL}\left(2,\mathbb{R}\right)$, $\gamma\not= \pm \mathrm{Id}$.
Then, $\gamma$ is elliptic if and only if $|\mathrm{Tr}(\gamma)|<2$.
If $\mathrm{Tr}(\gamma)=0$, then $\gamma$ has order 2 or 4 depending on $\mathrm{-Id}\in\Gamma$
or $\mathrm{-Id}\not\in\Gamma$.
If $\mathrm{Tr}(\gamma)=1$, then $\gamma$ has order 3 or 4 depending on
$\mathrm{-Id}\in\Gamma$ or $\mathrm{-Id}\not\in\Gamma$.
These are the two only possibilities for elliptic transformations with integral traces.
\end{prop}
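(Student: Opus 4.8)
The plan is to read everything off from two elementary facts about a matrix $\gamma=\left(\begin{smallmatrix}a&b\\c&d\end{smallmatrix}\right)\in\mathrm{SL}(2,\mathbb{R})$: the position of its fixed points, and the Cayley--Hamilton identity $\gamma^{2}=\mathrm{Tr}(\gamma)\gamma-\mathrm{Id}$.

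\emph{Step 1 (the equivalence).} A point $z$ is fixed by the M\"obius transformation attached to $\gamma$ exactly when $cz^{2}+(d-a)z-b=0$. If $c\neq 0$, the two roots are $\bigl((a-d)\pm\sqrt{\Delta}\,\bigr)/2c$ with
$$
\Delta=(d-a)^{2}+4bc=(a+d)^{2}-4(ad-bc)=\mathrm{Tr}(\gamma)^{2}-4 ,
$$
so they are non-real, hence complex conjugates with one lying in $\mathcal{H}$, precisely when $\Delta<0$, i.e. $|\mathrm{Tr}(\gamma)|<2$; this is the definition of an elliptic element, and the converse is the same chain of equivalences. If $c=0$ then $d=1/a$, the fixed points are $i\infty$ and $b/(d-a)\in\mathbb{R}$ (or $\gamma$ is parabolic, resp. $\pm\mathrm{Id}$), so $\gamma$ is never elliptic, and indeed $|\mathrm{Tr}(\gamma)|=|a+1/a|\geq 2$. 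Hence $\gamma$ is elliptic if and only if $|\mathrm{Tr}(\gamma)|<2$.

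\emph{Step 2 (integral traces and orders).} If in addition $\mathrm{Tr}(\gamma)\in\mathbb{Z}$, then $|\mathrm{Tr}(\gamma)|<2$ forces $\mathrm{Tr}(\gamma)\in\{-1,0,1\}$. Substituting into Cayley--Hamilton,
$$
\mathrm{Tr}(\gamma)=0\Rightarrow\gamma^{2}=-\mathrm{Id};\qquad \mathrm{Tr}(\gamma)=1\Rightarrow\gamma^{3}=\gamma^{2}-\gamma=-\mathrm{Id};\qquad \mathrm{Tr}(\gamma)=-1\Rightarrow\gamma^{3}=\mathrm{Id}.
$$
Since $\gamma\neq\pm\mathrm{Id}$, one checks in each case that no smaller power is trivial, so $\gamma$ has order $4$, $6$, $3$ respectively in $\mathrm{SL}(2,\mathbb{R})$. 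Note that $-\mathrm{Id}$ is always a power of $\gamma$ in the first two cases, so it automatically lies in $\Gamma$; passing to $\mathrm{PSL}(2,\mathbb{R})$, i.e.\ identifying $-\mathrm{Id}$ with $\mathrm{Id}$, the order of $\gamma$ drops to $2$ when $\mathrm{Tr}(\gamma)=0$ and to $3$ when $\mathrm{Tr}(\gamma)=\pm 1$. This is exactly the dichotomy in the statement, depending on whether or not $\Gamma$ is taken to contain $-\mathrm{Id}$; and it shows that, up to this identification, trace $0$ gives the only order-$2$ type and trace $\pm 1$ the only order-$3$ type, so there are no other elliptic transformations with integral trace.

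\emph{Main obstacle.} None of the steps is deep; the points needing care are the degenerate case $c=0$ in Step~1, which must be handled separately to avoid dividing by $c$, and the verification in Step~2 that the exhibited power is the \emph{exact} order and not merely an exponent --- e.g.\ that $\mathrm{Tr}(\gamma)=1$ excludes order $3$ because $\gamma^{3}=-\mathrm{Id}\neq\mathrm{Id}$, and excludes order $2$ because $\gamma^{2}=\gamma-\mathrm{Id}=\mathrm{Id}$ would give $\gamma=2\,\mathrm{Id}$, contradicting $\mathrm{Tr}(\gamma)=1$.
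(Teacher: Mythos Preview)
The paper states this proposition without proof (it is quoted as a standard fact), so there is no argument in the paper to compare yours against. Your proof is correct and is the standard elementary one: the fixed-point quadratic gives the trace criterion, and Cayley--Hamilton pins down the orders.

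Two remarks. First, your computation for $\mathrm{Tr}(\gamma)=1$ gives $\gamma^{3}=-\mathrm{Id}$ and hence order $6$ in $\mathrm{SL}(2,\mathbb{R})$, not order $4$ as printed in the statement; the ``$4$'' there is a typo in the paper, and your value is the correct one. Second, you rightly observe that when $\mathrm{Tr}(\gamma)\in\{0,1\}$ the matrix $-\mathrm{Id}$ is automatically a power of $\gamma$ and hence already lies in $\Gamma$; so the ``depending on $-\mathrm{Id}\in\Gamma$ or $-\mathrm{Id}\notin\Gamma$'' in the statement should be read as distinguishing whether one works in $\mathrm{SL}(2,\mathbb{R})$ or in its image in $\mathrm{PSL}(2,\mathbb{R})$, exactly as you interpret it.
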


\begin{defn}
Let $\Gamma\subseteq\mathrm{SL}\left(2,\mathbb{R}\right)$ be a discrete subgroup acting on $\mathcal{H}$.
A point $z\in\mathcal{H}$ is said to be elliptic if its isotropy group in $\Gamma$
is generated by an elliptic element of $\Gamma$. The order of $z$ is the order of its isotropy group.
\end{defn}

Fix $H=\left(\dfrac{a,b}{\mathbb{Q}}\right)$ be an indefinite quaternion $\mathbb{Q}$-algebra and
$\Gamma^1=\phi\left(\mathcal{O}_H^1\right)$.
Let $\Gamma$ be an arithmetic Fuchsian group of the first kind commensurable with $\Gamma^1$.
It acts on $\mathcal{H}$ by M\"{o}bius transformations
and the action factors through its image in $\mathrm{PSL}\left(2,\mathbb{R}\right)$.
The quotient $\Gamma\backslash\mathcal{H}$ has an analytic structure of Riemann surface.
If this Riemann surface is compact, then $\Gamma$ is said to be cocompact.
The Riemann surface $\Gamma\backslash\mathcal{H}$ is analytically isomorphic to an open subset
of a smooth algebraic curve defined over $\mathbb{Q}$,
which is denoted $X\left(\Gamma\right)$ (cf.\,\cite{shimura1967}).

Notice that the order of an elliptic point in $\Gamma\backslash\mathcal{H}$ can only be 2 or 3.

The following result is well known.

\begin{thm} Let $\Gamma$ be an arithmetic Fuchsian group of the first kind commensurable with $\Gamma^1_{H}$. Then $\Gamma$ is cocompact if and only if $H$ is a division algebra.
\end{thm}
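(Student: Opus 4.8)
The plan is to invoke the standard dichotomy for Fuchsian groups of the first kind: such a $\Gamma$ has finite hyperbolic covolume, and the quotient orbifold $\Gamma\backslash\mathcal{H}$ fails to be compact exactly when $\Gamma$ contains a parabolic element, since the ends of a finite-area hyperbolic orbifold are in bijection with the $\Gamma$-classes of parabolic fixed points. Granting this, the theorem reduces to the purely algebraic assertion that $\Gamma$ contains a parabolic element if and only if $H$ is \emph{not} a division algebra, i.e. $H\simeq\mathrm{M}(2,\mathbb{Q})$.

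First I would treat the split case $H\simeq\mathrm{M}(2,\mathbb{Q})$. Then a maximal order of $H$ is conjugate to $\mathrm{M}(2,\mathbb{Z})$, so $\Gamma^1_H=\phi(\mathcal{O}_H^1)$ is conjugate to $\mathrm{SL}(2,\mathbb{Z})$, and by hypothesis $\Gamma$ is commensurable with $\Gamma^1_H$; after conjugating we may assume $\Gamma$ is commensurable with $\mathrm{SL}(2,\mathbb{Z})$. Then $\Gamma\cap\mathrm{SL}(2,\mathbb{Z})$ has finite index in $\mathrm{SL}(2,\mathbb{Z})$, so a suitable power $\left(\begin{smallmatrix}1&n\\0&1\end{smallmatrix}\right)$, $n\geq 1$, lies in $\Gamma$; this element is parabolic, hence $\Gamma$ is not cocompact. (Concretely, $\Gamma(1,N)=\Gamma_0(N)$ has the cusp $i\infty$.)

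Next I would treat the case in which $H$ is a division algebra and show that $\Gamma$ has no parabolic element. Suppose $\gamma\in\Gamma$ is parabolic. Since $\Gamma\cap\Gamma^1_H$ has finite index in $\Gamma$, some power $\gamma^n$, $n\geq 1$, lies in $\Gamma^1_H$, and $\gamma^n$ is again parabolic because a parabolic element of a discrete group has infinite order; so we may assume $\gamma=\phi(\omega)$ with $\omega\in\mathcal{O}_H^1$. A parabolic element has $\mathrm{Tr}(\gamma)=\pm 2$ and $\gamma\neq\pm\mathrm{Id}$, whence $\mathrm{Tr}(\omega)=\pm 2$ and $\mathrm{N}(\omega)=1$; the quaternion $\omega$ therefore satisfies $\omega^2\mp 2\omega+1=0$, that is $(\omega\mp 1)^2=0$. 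Since a division algebra has no nonzero nilpotent element, $\omega=\pm 1$ and $\gamma=\pm\mathrm{Id}$, a contradiction. Hence $\Gamma$ has no parabolic element and is cocompact, which completes the argument.

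The step that genuinely needs external input is the topological one at the start: that a Fuchsian group of the first kind is cocompact precisely when it has no parabolic elements. This is where one relies on the structure theory of finite-covolume Fuchsian groups (equivalently, one could obtain the division-algebra direction from Godement's compactness criterion applied to $\mathrm{SL}_1(H)$, which is $\mathbb{Q}$-anisotropic exactly when $H$ is division); we take this as known, as announced before the statement. The remaining steps are elementary, resting in one case on the absence of zero divisors and in the other on the visible cusp $i\infty$ of any group commensurable with $\mathrm{SL}(2,\mathbb{Z})$.
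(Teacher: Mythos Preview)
The paper does not supply a proof of this theorem: it is introduced with ``The following result is well known'' and the text passes immediately to examples. So there is no argument in the paper to compare yours against.

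Your proof is correct and is precisely the standard one. The reduction to ``$\Gamma$ contains a parabolic iff $H$ is not division'' via the finite-covolume dichotomy is the right organizing step; the split direction (produce a unipotent in a finite-index subgroup of $\mathrm{SL}(2,\mathbb{Z})$) and the division direction (a parabolic in $\Gamma^1_H$ would give a nonzero nilpotent $\omega\mp 1$ in $H$) are exactly the arguments one finds in, e.g., Vign\'eras or Katok. Your care in passing to a power so as to land in $\Gamma^1_H$ before running the trace--norm computation is the only subtlety, and you handle it correctly. Nothing to add.
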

\begin{examples}
The quaternion $\mathbb{Q}$-algebra with discriminant 1 is isomorphic to $\mathrm{M}\left(2,\mathbb{Q}\right)$ and the maximal order is $\mathrm{M}\left(2,\mathbb{Z}\right)$. As seen in Remark \ref{comentmodulareichler}, the congruence subgroup $\Gamma_0(N)$ is provided by the Eichler order $\mathcal{O}_0(1,N)$. Thus, $\Gamma_0(N)$ is not cocompact. The Riemann surface $\Gamma_0(N)\backslash\mathcal{H}$ becomes compact by adding the set of cusps, $\mathrm{SL}\left(2,\mathbb{Z}\right)i\infty$. The compact Riemann surface corresponds to $X_0(N)\left(\mathbb{C}\right)$, the set of complex points of the modular curve $X_0(N)$.
\end{examples}

\begin{examples}
Let $H=\left(\dfrac{a,b}{\mathbb{Q}}\right)$ be a quaternion $\mathbb{Q}$-algebra of discriminant $D>1$.
The group $\Gamma(D,N)$ is cocompact because it does not have parabolic elements.
The Riemann surface $\Gamma(D,N)\backslash\mathcal{H}$ is compact and analytically isomorphic to an algebraic curve
$X(D,N)$ (cf.\,\cite{shimura1967}).
Fundamental domains for several $\Gamma(D,1)$ can be consulted at \cite{alsinabayer}.
\end{examples}

Let $\mathrm{GL}\left(2,\mathbb{R}\right)^{+}$ be the multiplicative subgroup of real matrices with positive discriminant and
$\gamma=\begin{pmatrix}a & \phantom{x}& b\\ c & \phantom{x}& d\end{pmatrix}
\in\mathrm{GL}\left(2,\mathbb{R}\right)^{+}.
$
Define $\rho(\gamma,z)=\dfrac{\mathrm{det}(\gamma)^{1/2}}{cz+d}$. Let $f:\mathcal{H}\to\mathbb{C}$ be a holomorphic function. Denote
$$f|_k\gamma(z)=\rho(\gamma,z)^kf(\gamma(z)).$$
\begin{defn}
An automorphic form of weight $k$ for a cocompact group $\Gamma$ is a holomorphic function $f$ on $\mathcal{H}$ such that
$f|_k\gamma=f$, for any $\gamma\in\Gamma$. The $\mathbb{C}$-vector space of automorphic forms of weight $k$ for $\Gamma$ is denoted by $S_k\left(\Gamma\right)$.
\end{defn}
From now on, we will restrict ourselves to automorphic forms of weight 2.

Let us denote by $\Omega$ the sheaf of holomorphic differentials on $X\left(\Gamma\right)$ and let $g$ denote the genus of $X\left(\Gamma\right)$. One has an isomorphism
$$
S_2\left(\Gamma\right)\simeq H^0\left(X\left(\Gamma\right),\Omega\right).
$$
Hence, in particular, the dimension of $S_2\left(\Gamma\right)$ as a $\mathbb{C}$-vector space is $g$.

\section{The homology of Shimura curves}
\subsection{The structure of the homology of a Shimura curve}
Let $\Gamma$ be an arithmetic Fuchsian group of the first kind attached to an indefinite quaternion $\mathbb{Q}$-algebra of discriminant $D>1$. The corresponding Shimura curve $X\left(\Gamma\right)$ is compact. The homology group $H_1\left(X\left(\Gamma\right)\left(\mathbb{C}\right),\mathbb{R}\right)$ contains the maximal lattice $H_1\left(X\left(\Gamma\right)\left(\mathbb{C}\right),\mathbb{Z}\right)$. We will use the following result to study the structure of $H_1\left(X\left(\Gamma\right)\left(\mathbb{C}\right),\mathbb{R}\right)$.
\begin{thm}[Armstrong \cite{armstrong}] Let $\Gamma$ be a group which acts simplicially on a simplicial complex $U$. Let $E$ be the normal subgroup of $\Gamma$ of elements with a fixed element of $U$. Given $\alpha\in U$ and $g\in\Gamma$, define $\phi_{\alpha}(g)$ as the homotopy class of an edge path joining $\alpha$ with $g(\alpha)$. Then the map $\phi_{\alpha}$ factors by a map $f_{\alpha}:\Gamma/E\to\pi_1\left(\Gamma\backslash U,\alpha\right)$, which is an isomorphism.
\label{thmarmstrong}
\end{thm}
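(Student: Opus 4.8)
The plan is to reduce the statement to the classical covering-space description of the fundamental group; the one non-formal ingredient is that the orbit map $p\colon U\to\Gamma\backslash U$, which fails to be a covering precisely over the points with non-trivial stabiliser, \emph{does} become one after replacing $U$ by $E\backslash U$. Throughout I use the hypotheses that are tacit in the statement and explicit in \cite{armstrong}: $U$ is connected and simply connected, $\Gamma$ acts properly discontinuously, and $E$ is the (automatically normal) subgroup generated by the elements of $\Gamma$ fixing a point of $U$. In the application to a Shimura curve one takes $U$ a $\Gamma$-invariant triangulation of $\mathcal H$, which is contractible; after one barycentric subdivision one may assume the action is without inversions, so that $\Gamma\backslash U$ is again a simplicial complex and $p$ is simplicial.

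First I would check that $\phi_\alpha$ is a well-defined homomorphism and is surjective. For $g\in\Gamma$, choose an edge path $\lambda_g$ in $U$ from $\alpha$ to $g\alpha$; since $U$ is simply connected its class rel endpoints is unique, so $p\circ\lambda_g$ determines a well-defined element $\phi_\alpha(g)\in\pi_1(\Gamma\backslash U,\alpha)$, unaffected on replacing $g$ by $gs$ with $s\in\mathrm{Stab}_\Gamma(\alpha)$. Equivariance of $p$ gives $p\circ(g\lambda_h)=p\circ\lambda_h$, and $\lambda_g\cdot(g\lambda_h)$ joins $\alpha$ to $gh\alpha$, so $\phi_\alpha$ is a homomorphism. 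For surjectivity, any edge loop in $\Gamma\backslash U$ at $\alpha$ lifts, one $1$-simplex at a time, to an edge path in $U$ from $\alpha$ (surjectivity of the simplicial map $p$ together with equivariance lifts each successive edge), ending at $g\alpha$ for some $g\in\Gamma$ because the endpoint lies over $\alpha$; the loop is then $\phi_\alpha(g)$. Finally $E\subseteq\ker\phi_\alpha$: if $g$ fixes $\beta$ and $\lambda$ joins $\alpha$ to $\beta$, then $\lambda\cdot(g\bar\lambda)$ joins $\alpha$ to $g\alpha$ and projects to the null-homotopic loop $(p\circ\lambda)\cdot\overline{(p\circ\lambda)}$, so $\phi_\alpha(g)=1$; as $\ker\phi_\alpha$ is a subgroup it contains $E$.

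The content of the theorem is the opposite inclusion $\ker\phi_\alpha\subseteq E$, which I would extract from the sub-lemma that $Y:=E\backslash U$ \emph{is simply connected}. Granting it, the residual action of $\Gamma/E$ on $Y$ is free --- if $\bar g$ fixes $Ex$ then $gx=nx$ for some $n\in E$, whence $n^{-1}g$ stabilises $x$, lies in $E$, and $\bar g=1$ --- and it remains properly discontinuous, so $Y\to(\Gamma/E)\backslash Y=\Gamma\backslash U$ is a covering with simply connected total space and deck group $\Gamma/E$; hence $\Gamma/E\cong\pi_1(\Gamma\backslash U,\alpha)$. Transporting a lift of $\lambda_g$ through this identification shows that the resulting isomorphism is exactly the map $f_\alpha$ induced by $\phi_\alpha$, so $\ker\phi_\alpha=E$ and $f_\alpha$ is an isomorphism.

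It remains to prove the sub-lemma, which is the heart of Armstrong's argument. Define $\psi\colon E\to\pi_1(Y,\alpha)$ by the same recipe applied to the $E$-action on $U$ (again well defined because $U$ is simply connected); as in the second paragraph it is surjective. It is also the trivial homomorphism: for $g=s_1\cdots s_k\in E$ with each $s_i$ fixing a vertex $v_i$, and edge paths $\mu_i$ from $\alpha$ to $v_i$, the concatenation $\mu_1\cdot(s_1\bar\mu_1)\cdot(s_1\mu_2)\cdot(s_1s_2\bar\mu_2)\cdots(s_1\cdots s_k\bar\mu_k)$ joins $\alpha$ to $g\alpha$, and since every prefix $s_1\cdots s_j$ lies in $E$ its image in $Y$ is a product of null-homotopic loops of the form $\nu\cdot\bar\nu$; hence $\pi_1(Y,\alpha)=\psi(E)=\{1\}$. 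The main obstacle is thus not any single computation but the careful bookkeeping of base points and stabilisers, together with the point-set verifications (honest simplicial quotients, proper discontinuity) that legitimise the covering-space step --- precisely the circumstances, finite stabilisers and a properly discontinuous action, in which Fuchsian groups operate. Applied to a Shimura curve $X(\Gamma)$, with $U$ a triangulation of $\mathcal H$ and $E$ generated by the elliptic elements of $\Gamma$, the theorem gives $\pi_1(X(\Gamma)(\mathbb C))\cong\Gamma/E$; abelianising then yields the description of $H_1(X(\Gamma)(\mathbb C),\mathbb Z)$ used in the next section.
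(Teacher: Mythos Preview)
The paper does not prove this theorem at all: it is quoted verbatim from Armstrong \cite{armstrong} and used as a black box in the proof of Theorem~\ref{key1}. So there is no ``paper's own proof'' to compare against; you have supplied an argument where the authors chose to cite one.

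That said, your sketch is sound and follows a standard route: factor the orbit map through the intermediate quotient $Y=E\backslash U$, show $Y$ is simply connected, and then use that $\Gamma/E$ acts freely on $Y$ to identify $\Gamma/E$ with the deck group of the universal cover of $\Gamma\backslash U$. The verification that $\phi_\alpha$ is a surjective homomorphism with $E\subseteq\ker\phi_\alpha$ is correct, as is the computation that every generator of $E$ maps to a null-homotopic loop in $Y$. Two points deserve a sentence more of justification if you expand this beyond a sketch: first, that the induced action of $\Gamma/E$ on $E\backslash U$ is genuinely a covering-space action (freeness on vertices does not by itself rule out inversions of higher simplices in the quotient, so you should say why the barycentric subdivision on $U$ still buys you this downstairs); second, the edge-by-edge lifting you invoke for surjectivity works because the quotient map is simplicial and surjective on stars, which is worth stating explicitly. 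Armstrong's original argument is more hands-on with edge-path homotopies and does not pass through the intermediate space $Y$, but your covering-space repackaging is cleaner for the application at hand and lands exactly where the paper needs it: $\pi_1(X(\Gamma)(\mathbb C))\cong\Gamma/E$, then abelianise.
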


Notice that M\"{o}bius transforms are conform; hence, they preserve geodesic triangles. In particular, arithmetic Fuchsian groups act simplicially on $\mathcal{H}$.

\begin{thm}Let $\Gamma$ be an arithmetic Fuchsian group of the first kind. Denote by $E$ and $P$ the sets of elliptic and parabolic elements of $\Gamma$, respectively. Let $\Gamma'$ be the commutator of $\Gamma$. Let $\alpha\in\mathcal{H}$ if $\Gamma$ is cocompact or $\alpha\in\mathcal{H}\cup\mathrm{P}^1\left(\mathbb{Q}\right)$, otherwise. For any $g\in\Gamma$, define $\phi_{\alpha}(g)=\{\alpha,g(\alpha)\}\in H_1\left(X\left(\Gamma\right)\left(\mathbb{C}\right),\mathbb{Z}\right)$. Then
\begin{itemize}
\item[(i)] If $\Gamma$ is not cocompact, then, for any $\alpha\in\mathcal{H}\cup\mathrm{P}^1\left(\mathbb{Q}\right)$, there is an exact sequence of groups
$$
0\to \Gamma'EP \to\Gamma\buildrel\phi_{\alpha}\over\rightarrow H_1\left(X\left(\Gamma\right)(\mathbb{C}),\mathbb{Z}\right)\to 0.
$$
\item[(ii)] If $\Gamma$ is cocompact, then, for any $\alpha\in\mathcal{H}$, there is an exact sequence of groups
$$
0\to \Gamma'E \to\Gamma\buildrel\phi_{\alpha}\over\rightarrow H_1\left(X\left(\Gamma\right)(\mathbb{C}),\mathbb{Z}\right)\to 0.
$$
\end{itemize}
In both cases, the map $\phi_{\alpha}$ is independent of $\alpha$.
\label{key1}
\end{thm}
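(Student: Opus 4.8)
The plan is to deduce Theorem~\ref{key1} from Armstrong's Theorem~\ref{thmarmstrong} applied to the simplicial action of $\Gamma$ on $\mathcal{H}$ (or on $\mathcal{H}\cup\mathrm{P}^1(\mathbb{Q})$ in the non-cocompact case), combined with the Hurewicz isomorphism and the standard identification of $\Gamma\backslash\mathcal{H}$ (resp.\ its compactification) with $X(\Gamma)(\mathbb{C})$. First I would fix the basepoint $\alpha$ and invoke Theorem~\ref{thmarmstrong}: since M\"obius transformations preserve geodesic triangles, $\Gamma$ acts simplicially, so the map $\phi_\alpha\colon g\mapsto$ (homotopy class of an edge path from $\alpha$ to $g(\alpha)$) factors through an isomorphism $f_\alpha\colon \Gamma/E \xrightarrow{\ \sim\ } \pi_1\bigl(\Gamma\backslash\mathcal{H},\alpha\bigr)$, where $E$ is the normal subgroup of elements fixing a point. (One must check $E$ as defined here — generated by elliptic elements, together with parabolic elements in the non-cocompact case where the cusps are part of the complex $U$ — coincides with Armstrong's stabiliser subgroup; this is where the dichotomy between cases (i) and (ii) enters, since cusps are points of $\mathcal{H}\cup\mathrm{P}^1(\mathbb{Q})$ with infinite cyclic parabolic stabiliser.)

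Next I would pass from $\pi_1$ to $H_1$. Abelianising the isomorphism $f_\alpha$ gives $\Gamma/E\Gamma' \cong \pi_1(\Gamma\backslash\mathcal{H},\alpha)^{\mathrm{ab}}$; note $E\Gamma'=\Gamma'E$ since $E\trianglelefteq\Gamma$, and in the non-cocompact case $E$ is replaced by $EP$, yielding $\Gamma/\Gamma'EP$. By the Hurewicz theorem, $\pi_1(Y,\alpha)^{\mathrm{ab}}\cong H_1(Y,\mathbb{Z})$ for any reasonable connected space $Y$; applying this to $Y=\Gamma\backslash\mathcal{H}$ (cocompact case) or to the compactified quotient (non-cocompact case, where adding finitely many cusps does not change $\pi_1$ once one is careful — actually it does kill the loops around cusps, which is precisely why $P$ must be quotiented out) identifies the abelianisation with $H_1(X(\Gamma)(\mathbb{C}),\mathbb{Z})$. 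The composite sends $g$ to the homology class of the image of the geodesic segment $\{\alpha,g(\alpha)\}$, which is the definition of $\phi_\alpha(g)$ in the statement. Thus $\phi_\alpha$ is surjective with kernel exactly $\Gamma'E$ (resp.\ $\Gamma'EP$), giving the two exact sequences.

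For the final assertion that $\phi_\alpha$ is independent of $\alpha$, I would argue directly: for two basepoints $\alpha,\beta$ and any $g\in\Gamma$, the difference $\phi_\alpha(g)-\phi_\beta(g)$ is represented by the closed loop $\{\alpha,g(\alpha)\}-\{\beta,g(\beta)\}$, which equals the boundary of the "geodesic quadrilateral" with vertices $\alpha,g(\alpha),g(\beta),\beta$; since $\mathcal{H}$ is contractible this quadrilateral bounds, so its image in $X(\Gamma)(\mathbb{C})$ is null-homologous. Equivalently, one uses that the path $\{\alpha,\beta\}$ maps to a $1$-chain $c$ with $\phi_\alpha(g)-\phi_\beta(g)=g_*c - c$, which is a boundary in homology because $g$ acts trivially on $H_1$ of the quotient. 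I expect the main obstacle to be the bookkeeping in the non-cocompact case: verifying that Armstrong's theorem applies to the non-locally-finite, non-manifold complex $\mathcal{H}\cup\mathrm{P}^1(\mathbb{Q})$ (with its Farey-type simplicial structure) and that the resulting quotient is genuinely $X(\Gamma)(\mathbb{C})$ with the cusp loops collapsed — this is what forces the extra factor $P$ — whereas the cocompact case is comparatively clean since $\Gamma\backslash\mathcal{H}$ is already the compact curve. A secondary subtlety is that $\Gamma\backslash\mathcal{H}$ is an orbifold, not a manifold, so one should either work with the orbifold fundamental group or note that the elliptic stabilisers are exactly absorbed into $E$, which is consistent with Armstrong's formulation.
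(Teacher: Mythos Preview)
Your proposal is correct and follows essentially the same route as the paper: Armstrong's theorem gives $\Gamma/E\cong\pi_1$, then abelianisation plus Hurewicz yields $\Gamma/\Gamma'E\cong H_1$, and independence of $\alpha$ is the elementary chain-level computation you describe (the paper writes it as $\phi_\alpha(g)=\{\alpha,\beta\}+\{\beta,g(\beta)\}+\{g(\beta),g(\alpha)\}=\phi_\beta(g)$ using $\Gamma$-invariance of the last term). The paper also inserts an explicit verification that $(\Gamma/E)'=\Gamma'E/E$ and that $\phi_\alpha$ is a homomorphism, and for the non-cocompact case it simply cites Manin rather than carrying out the cusp bookkeeping you sketch.
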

\begin{proof}
We prove the result for cocompact $\Gamma$. The other case is proved in \cite{manin}. First, we check that for any $\alpha\in\mathcal{H}$, $\phi_{\alpha}$ is a group homomorphism. Thus, take $g,h\in\Gamma$. Since $\mathcal{H}$ is simply connected, we have
$$\{\alpha,gh(\alpha)\}=\{\alpha,g(\alpha)\}+\{g(\alpha),gh(\alpha)\}=\{\alpha,g(\alpha)\}+\{\alpha,h(\alpha)\},$$
hence, $\phi_{\alpha}$ is a group homomorphism.

Next, we check the independency on $\alpha$. Let $\alpha,\beta\in\mathcal{H}$. We can decompose
$$
\phi_{\alpha}(g)=\{\alpha,\beta\}+\{\beta,g(\beta)\}+\{g(\beta),g(\alpha)\}=\{\alpha,\beta\}+\{\beta,g(\beta)\}+\{\beta,\alpha\}=\phi_{\beta}(g).
$$
We claim that the commutator subgroup of $\Gamma/E$ is $\Gamma'E/E$. To see this, let us consider the projection $p:\Gamma\to\Gamma/E$, which sends $\Gamma'$ onto $\Gamma'E/E$. Hence, it induces a projection $\overline{p}:\Gamma/\Gamma'\to\Gamma/\Gamma'E$, so that $\Gamma/\Gamma'E\simeq\left(\Gamma/E\right)/\left(\Gamma'E/E\right)$ is abelian. Thus, $\left(\Gamma/E\right)'\subseteq \Gamma'E/E$. On the other hand, for any $a,b\in\Gamma$, one has, by normality of $E$, that $aba^{-1}b^{-1}E=aEbE(aE)^{-1}(bE)^{-1}$. This shows the reverse inclusion. In particular, there is an isomorphism $\psi: H_1\left(X\left(\Gamma\right)\left(\mathbb{C}\right),\mathbb{Z}\right)\to \Gamma/\Gamma'E$.

Consider the commutative diagram
$$
\begin{array}{cccc}
\Gamma & \buildrel\phi_{\alpha}\over\longrightarrow & H_1\left(X\left(\Gamma\right)\left(\mathbb{C}\right),\mathbb{Z}\right) &\\
\downarrow & & \downarrow \psi &\\
\Gamma/E & \longrightarrow & \left(\Gamma/E\right)/\left(\Gamma'E/E\right) & \longrightarrow 0.
\end{array}
$$
It follows that $\mathrm{Ker}(\phi_{\alpha})=\Gamma'E$; thus, the result follows.

\end{proof}

Let $G$ be a set of generators of $\Gamma$. If $\Gamma$ is cocompact, denote by $H(G)$ the set of elements of $G$ which are neither elliptic nor commutators. If $\Gamma$ is not cocompact, we define $H(G)$ as the set of elements of $G$ which are neither elliptic nor parabolic, nor commutators. Let $i\in\mathcal{H}$ denote the imaginary unit.

\begin{defn} A family of quadratic distinguished classes is
$$
\{i,\sigma(i)\}_{\sigma\in H(G)},
$$
with $G$ a set of generators of $\Gamma$.
\end{defn}

We have the following
\begin{prop}Any class in $H_1\left(X\left(\Gamma\right)(\mathbb{C}),\mathbb{Z}\right)$ can be represented as a sum of quadratic distinguished classes. In particular, the quadratic distinguished classes generate $H_1\left(X\left(\Gamma\right)(\mathbb{C}),\mathbb{R}\right)$ as a real vector space.
\label{generation}
\end{prop}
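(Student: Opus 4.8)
The plan is to derive both statements directly from Theorem~\ref{key1}, using only that the chosen $G$ generates $\Gamma$. Fix the base point $\alpha=i$. By Theorem~\ref{key1}, the map $\phi_i:\Gamma\to H_1(X(\Gamma)(\mathbb{C}),\mathbb{Z})$, $\phi_i(g)=\{i,g(i)\}$, is a surjective group homomorphism, with kernel $\Gamma'E$ when $\Gamma$ is cocompact and $\Gamma'EP$ otherwise. Consequently, every class $c\in H_1(X(\Gamma)(\mathbb{C}),\mathbb{Z})$ equals $\{i,g(i)\}$ for some $g\in\Gamma$.

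First I would write $g$ as a word in the generators, $g=\sigma_1^{\varepsilon_1}\cdots\sigma_n^{\varepsilon_n}$ with each $\sigma_j\in G$ and $\varepsilon_j\in\{\pm1\}$. Since $\phi_i$ is a homomorphism, $\phi_i(\sigma_j^{-1})=-\phi_i(\sigma_j)$, and therefore
$$
c=\sum_{j=1}^{n}\varepsilon_j\,\phi_i(\sigma_j)=\sum_{j=1}^{n}\varepsilon_j\,\{i,\sigma_j(i)\}.
$$
Next I would discard the terms indexed by generators outside $H(G)$: if $\sigma_j\notin H(G)$, then $\sigma_j$ is elliptic, or parabolic (in the non-cocompact case), or a commutator, so $\sigma_j$ lies in $\mathrm{Ker}(\phi_i)$ and the corresponding term vanishes. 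Hence
$$
c=\sum_{j\,:\,\sigma_j\in H(G)}\varepsilon_j\,\{i,\sigma_j(i)\},
$$
which exhibits $c$ as an integral combination of quadratic distinguished classes; since one may always take $G$ symmetric (and then $H(G)$ is stable under inversion, with $-\{i,\sigma(i)\}=\{i,\sigma^{-1}(i)\}$), this is in fact an honest sum of quadratic distinguished classes. This proves the first assertion.

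For the second assertion, recall that $X(\Gamma)(\mathbb{C})$ is a compact Riemann surface of some genus $g$, so $H_1(X(\Gamma)(\mathbb{C}),\mathbb{Z})\cong\mathbb{Z}^{2g}$ is a full-rank lattice inside $H_1(X(\Gamma)(\mathbb{C}),\mathbb{R})\cong\mathbb{R}^{2g}$. Any subset of a full-rank lattice that generates it as a $\mathbb{Z}$-module spans the ambient real vector space over $\mathbb{R}$; applying this to the quadratic distinguished classes, which generate $H_1(X(\Gamma)(\mathbb{C}),\mathbb{Z})$ by the first part, yields the claim.

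The argument is essentially bookkeeping once Theorem~\ref{key1} is available. The only point demanding a little care — and the closest thing to an obstacle — is the verification that $\mathrm{Ker}(\phi_i)$ absorbs every generator not in $H(G)$; this is immediate from the description of the kernel in Theorem~\ref{key1} together with the definitions of $E$, $P$ and the commutator subgroup $\Gamma'$, but it is precisely the step where the definition of $H(G)$ is used, and it is also what makes it legitimate to pass from an integral linear combination to a genuine sum once $G$ is taken symmetric.
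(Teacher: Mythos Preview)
Your proof is correct and follows essentially the same route as the paper: both start from Theorem~\ref{key1}, write $g$ as a word in the generators, use the homomorphism property of $\phi_i$ to split $\{i,g(i)\}$ into a sum over the letters, and then kill the terms coming from generators outside $H(G)$. The only stylistic difference is that the paper eliminates an elliptic (or parabolic) term by the direct geometric computation $\{i,\eta_k(i)\}=\{i,\tau\}+\{\tau,\eta_k(i)\}=\{i,\tau\}+\{\tau,i\}=0$ using the fixed point $\tau$, whereas you invoke the description of $\ker\phi_i$ from Theorem~\ref{key1}; these are equivalent, and your explicit remark about taking $G$ symmetric to turn an integral combination into a genuine sum is in fact slightly more careful than the paper's treatment.
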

\begin{proof}
By Theorem \ref{key1}, any class in $H_1\left(X\left(\Gamma\right)(\mathbb{C}),\mathbb{Z}\right)$ is of the form $\{i,g(i)\}$ for some $g\in\Gamma$. Decompose $g=\eta_1\cdots\eta_l$ where the $\eta_k$ are generators of $\Gamma$. Notice that
$$
\{i,g(i)\}=\{i,\eta_1(i)\}+\{\eta_1(i),g(i)\}=\{i,\eta_1(i)\}+\{i,\eta_2\eta_3\cdots\eta_l(i)\}.
$$

Iterating, we have
$$
\{i,g(i)\}=\sum_{k=1}^{l}\{i,\eta_k(i)\}.
$$
If $\eta_k\not\in H(G)$, we can consider its fixed point $\tau\in\mathcal{H}$ and have
$$\{i,\eta_k(i)\}=\{i,\tau\}+\{\tau,\eta_k(i)\}=\{i,\tau\}+\{\eta_k(\tau),\eta_k(i)\}=\{i,\tau\}+\{\tau,i\}=0.$$
\end{proof}

\begin{rem}
For any arithmetic Fuchsian group of the first kind $\Gamma$, it is possible to find a set of generators of $\Gamma$ of the form $\{\eta_1,...,\eta_{2g},\varepsilon_1,...,\varepsilon_t\}$ with $\eta_k$ hyperbolic and $\varepsilon_j$ elliptic, parabolic or commutator for any $k,j$ (cf.\,\cite{takeuchi}). Thus, the set $\{\{i,\eta_1(i)\},...,\{i,\eta_{2g}(i)\}\}$ is a basis of $H_1\left(X\left(\Gamma\right)\left(\mathbb{C}\right),\mathbb{Z}\right)$. We illustrate this fact numerically by showing a basis of the homology for several Shimura curves.
\end{rem}

\begin{examples}
Let us define the matrices
$$
V_k=
\begin{pmatrix}
k^{'} & \phantom{x} & 1\\
-(k^{'}k+1) & \phantom{x} & -k
\end{pmatrix}
,
$$
with $1\leq k,k^{'}\leq p-1$ and $kk^{'}\equiv -1\pmod{p}$. Denote the unitary translation by $T$. Table \ref{table:1} shows generators and fundamental relations of $\Gamma_0(N)$ (with $N=p$ prime).
{\tiny
\begin{table}[!h]
\caption{}
\label{table:1}
\begin{tabular}{|c|l|l|c|c}
\hline $p$ & Minimal set of generators of $\Gamma_0(p)$ & Relations & Genus of $X_0(p)$\\
\hline 2   & $T,V_1$ & $V_1^2=1$ & 0\\
\hline 3 & $T,V_2$ & $V_2^3=1$ & 0\\
\hline 5 & $T,V_2,V_3$ & $V_2^2=V_3^3=1$ & 0\\
\hline 7 & $T,V_3,V_5$ & $V_3^3=V_5^3=1$ & 0\\
\hline 11 & $T,V_4,V_6$ & \phantom{x} & 1\\
\hline 13 & $T,V_4,V_5,V_8,V_10$ & $V_5^2=V_8^2=V_4^3=V_10^3=1$ & 0\\
\hline 17 & $T,V_4,V_7,V_9,V_{13}$ & $V_4^2=V_{13}^2=1$ & 1\\
\hline 19 & $T,V_5,V_8,V_{12},V_{13}$ & $V_8^2=V_{12}^2=1$ & 1\\
\hline 23 & $T,V_8,V_{10},V_{12},V_{14}$ & \phantom{x} & 2\\
\hline 29 & $T,V_6,V_{12},V_{13},V_{15},V_{17},V_{22}$ & $V_{12}^2=V_{17}^2=1$ &2\\
\hline 31 & $T,V_6,V_9,V_{13},V_{17},V_{21},V_{26}$ & $V_6^3=V_{26}^3=1$ & 2\\
\hline 37 & $T,V_6,V_8,V_{11},V_{16},V_{20},V_{27},V_{28},V_{31}$ & $V_{11}^3=V_{27}^3=1$ &3\\
\hline 41 & $T,V_7,V_9,V_{16},V_{19},V_{21},V_{24},V_{32},V_{33}$ & $V_9^2=V_{32}^2=1$ & 3\\
\hline 43 & $T,V_7,V_{13},V_{15},V_{18},V_{24},V_{27},V_{29},V_{37}$ & $V_7^3=V_{37}^3=1$ &3\\
\hline 47 & $T,V_{13},V_{16},V_{19},V_{22},V_{24},V_{27},V_{30},V_{33}$ & $\phantom{x}$ & 4\\
\hline 53 & $\begin{array}{l}T,V_{12},V_{14},V_{20},V_{23},V_{25},V_{27},V_{30},V_{32}\\V_{38},V_{40}\end{array}$ & $V_{23}^2=V_{30}^2=1$ & 4\\
\hline 59 & $\begin{array}{l}T,V_{12},V_{15},V_{20},V_{26},V_{28},V_{30},V_{32},V_{38}\\V_{43},V_{46}\end{array}$ & $\phantom{x}$ &5\\
\hline 61 & $\begin{array}{l}T,V_{9},V_{11},V_{14},V_{18},V_{25},V_{28},V_{32},V_{35}\\V_{42},V_{48},V_{50},V_{51}\end{array}$ & $V_{11}^2=V_{50}^2=V_{14}^3=V_{48}^3=1$ &4\\
\hline 67 & $\begin{array}{l}T,V_{10},V_{18},V_{21},V_{24},V_{30},V_{31},V_{35},V_{38}\\V_{42},V_{45},V_{48},V_{56}\end{array}$ & $V_{30}^3=V_{28}^3=1$ & 5\\
\hline 71 & $\begin{array}{l}T,V_{9},V_{13},V_{24},V_{26},V_{28},V_{34},V_{36},V_{42}\\V_{44},V_{46},V_{57},V_{61}\end{array}$ & $\phantom{x}$ &6\\
\hline 73 & $\begin{array}{l}T,V_{9},V_{11},V_{17},V_{22},V_{25},V_{27},V_{33},V_{39}\\V_{46},V_{47},V_{50},V_{55},V_{61},V_{65}\end{array}$ & $V_{27}^2=V_{46}^2=V_9^3=V_{65}^3=1$ &5\\
\hline 79 & $\begin{array}{l}T,V_{12},V_{20},V_{24},V_{25},V_{30},V_{34},V_{36},V_{42}\\V_{44},V_{48},V_{53},V_{56},V_{58},V_{66}\end{array}$ & $V_{24}^3=V_{56}^3=1$ & 6\\
\hline 83 & $\begin{array}{l}T,V_{14},V_{22},V_{28},V_{30},V_{32},V_{37},V_{40},V_{42}\\V_{45},V_{50},V_{52},V_{54},V_{60},V_{68}\end{array}$ & $\phantom{x}$ &7\\
\hline 89 & $\begin{array}{l}T,V_{10},V_{18},V_{21},V_{31},V_{34},V_{36},V_{39},V_{43}\\V_{45},V_{49},V_{52},V_{55},V_{57},V_{67},V_{70},V_{78}\end{array}$ & $V_{34}^2=V_{55}^2=1$ &7\\
\hline 97 & $\begin{array}{l}T,V_{11},V_{15},V_{22},V_{23},V_{28},V_{30},V_{36},V_{40}\\V_{46},V_{50},V_{56},V_{62},V_{66},V_{68},V_{73},V_{75},V_{81},V_{85}\end{array}$ & $V_{22}^2=V_{75}^2=V_{36}^3=V_{62}^3=1$ & 7\\
\hline 101 & $\begin{array}{l}T,V_{10},V_{19},V_{23},V_{27},V_{30},V_{35},V_{40},V_{43}\\V_{49},V_{51},V_{57},V_{60},V_{65},V_{70},V_{73},V_{77},V_{81},V_{91}\end{array}$ & $V_{10}^2=V_{91}^2=1$ &8\\
\hline
\end{tabular}
\end{table}
}
\end{examples}

\begin{examples}The Shimura curve $X(6,1)$ has genus 0 and $\Gamma(6,1)$ can be generated by six elliptic matrices. The Shimura curve $X(10,1)$ has genus 0 and $\Gamma(10,1)$ can be generated by three elliptic matrices. The Shimura curve $X(15,1)$ has genus 1 and $\Gamma(15,1)$ has the following minimal set of generators:
$$
\alpha=\frac{1}{2}\left(\begin{array}{ccc}3 & \phantom{x} & 1 \\ 5 & \phantom{x} & 3\end{array}\right),\,
h=\left(\begin{array}{ccc}2+\sqrt{3} & \phantom{x} & 0\\0 & \phantom{x} & 2-\sqrt{3}\end{array}\right),\,
\beta=\frac{1}{2}\left(\begin{array}{ccc}1+2\sqrt{3} & \phantom{x} & 3-2\sqrt{3}\\15+10\sqrt{3} & \phantom{x} & 1-2\sqrt{3}\end{array}\right).
$$
The matrices $\alpha,h$ are hyperbolic and $\beta$ is elliptic of order 6.
\end{examples}

\subsection{Modular integrals} In what follows, $\Gamma$ will denote an arithmetic Fuchsian group commensurable with $\Gamma^1_H$ for some quaternion $\mathbb{Q}$-algebra $H$.

\begin{defn} Let $f$ be an automorphic form of weight 2 for $\Gamma$ and $\tau\in\mathcal{H}$ a quadratic imaginary point. The modular integral attached to $f$ and $\tau$ is the map
$$
\begin{array}{rccc}
\phi_{f}^{\tau}: &\Gamma^1_H\tau & \longrightarrow & \mathbb{C}\\
& \gamma(\tau) & \mapsto &\displaystyle \int_{\gamma(\tau)}^{\tau}f.
\end{array}
$$
We will denote $\phi_f=\phi_f^i$ when $\tau=i$, the imaginary unit.
\end{defn}
\begin{rem}
If $\Gamma=\Gamma_0(N)$ and instead of a quadratic imaginary point we consider $\tau=i\infty$ we have the classical modular integral as in \cite{mtt}.
\end{rem}
Let $\Gamma$ be an arithmetic Fuchsian group of the first kind commensurable with $\Gamma^1_H$ for some quaternion $\mathbb{Q}$-algebra $H$. In particular, there is a finite number of coset representatives of $(\Gamma^1_H\cap\Gamma)\backslash\Gamma^1_H$.

\begin{lem}Let $f\in S_2\left(\Gamma\right)$ and let $A,\gamma\in\mathrm{GL}\left(2,\mathbb{R}\right)^{+}$. Then
$$
\phi_f^{\tau}(A\gamma(\tau))=\phi_{f|A}^{\tau}(\gamma(\tau))+\phi_f^{\tau}(A(\tau)),
$$
where $\phi_{f|A}^{\tau}(\gamma(\tau))=\int_{\gamma(\tau)}^{\tau}f|A$.
\end{lem}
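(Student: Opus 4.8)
The plan is to reduce the claimed identity to two elementary ingredients: the additivity of the modular integral along concatenated paths, and the transformation behaviour of the differential $f(z)\,dz$ under the M\"obius action of $\mathrm{GL}(2,\mathbb{R})^{+}$ in weight $2$. The first ingredient is available because $f(z)\,dz$ is a holomorphic, hence closed, $1$-form on the simply connected domain $\mathcal{H}$; therefore $\int_{w}^{\tau}f$ depends only on the endpoints $w,\tau\in\mathcal{H}$, and we may freely write $\phi_f^{\tau}(w)=\int_{w}^{\tau}f$ for arbitrary $w\in\mathcal{H}$ (the formula in the statement needs nothing about $w$ lying in the orbit $\Gamma_H^1\tau$), split a path through an intermediate point, and perform substitutions under the integral sign.

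First I would record the change-of-variables identity. For $A=\begin{pmatrix} a & b\\ c & d\end{pmatrix}\in\mathrm{GL}(2,\mathbb{R})^{+}$ one computes $\dfrac{d}{dz}A(z)=\dfrac{ad-bc}{(cz+d)^{2}}=\dfrac{\det(A)}{(cz+d)^{2}}$, so that $d(A(z))=\dfrac{\det(A)}{(cz+d)^{2}}\,dz$. On the other hand, by definition $f|_2A(z)=\rho(A,z)^{2}f(A(z))=\dfrac{\det(A)}{(cz+d)^{2}}\,f(A(z))$. Hence
$$
f(A(z))\,d(A(z))=\bigl(f|_2A\bigr)(z)\,dz,
$$
and integrating this equality along any path from $p$ to $q$ in $\mathcal{H}$ and substituting $w=A(z)$ yields
$$
\int_{A(p)}^{A(q)}f=\int_{p}^{q}f|A .
$$

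Then the lemma follows by splitting the path from $A\gamma(\tau)$ to $\tau$ through the intermediate point $A(\tau)$:
$$
\phi_f^{\tau}(A\gamma(\tau))=\int_{A\gamma(\tau)}^{\tau}f=\int_{A(\gamma(\tau))}^{A(\tau)}f+\int_{A(\tau)}^{\tau}f .
$$
The second summand is exactly $\phi_f^{\tau}(A(\tau))$. For the first summand, apply the change-of-variables identity with $p=\gamma(\tau)$ and $q=\tau$, obtaining $\int_{A(\gamma(\tau))}^{A(\tau)}f=\int_{\gamma(\tau)}^{\tau}f|A=\phi_{f|A}^{\tau}(\gamma(\tau))$. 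Combining the two displays gives $\phi_f^{\tau}(A\gamma(\tau))=\phi_{f|A}^{\tau}(\gamma(\tau))+\phi_f^{\tau}(A(\tau))$, as claimed.

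There is no genuine obstacle here; the only point requiring a little care is the verification that the weight-$2$ slash operator is precisely the one compatible with the pullback of $f\,dz$, i.e.\ that $\rho(A,z)^{2}=\det(A)(cz+d)^{-2}$ is the automorphy factor of the differential $dz$. Once that compatibility is in place, the remaining steps are purely formal, since path-independence of the integrals makes both the concatenation of paths and the substitution $w=A(z)$ legitimate.
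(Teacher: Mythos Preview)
Your proof is correct and follows essentially the same route as the paper's: both compute the change of variables $\int_{A(p)}^{A(q)}f=\int_p^q f|A$ via $d(A(z))=\rho(A,z)^2\,dz$, and then use holomorphy of $f$ on $\mathcal{H}$ to split the integral from $A\gamma(\tau)$ to $\tau$ through the intermediate point $A(\tau)$ (the paper phrases this last step as ``the integral along the triangle with vertices $A(i)$, $A\gamma(i)$, $i$ vanishes'', which is exactly your path-concatenation).
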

\begin{proof}
Suppose $\tau=i$. First, notice that $dA(z)=\rho(A,z)^2dz$, hence
$$
\phi_{f|A}(\gamma(i))=\int_{\gamma(i)}^{i}\rho(A,z)^2f(A(z))dz=\int_{A\gamma(i)}^{A(i)}f(w)dw.
$$
Since $f$ is holomorphic in the upper half-plane, the integral along the triangle with vertices $A(i)$, $A\gamma(i)$ and $i$ vanishes. Hence
$$
\phi_{f|A}(\gamma(i))=\int_{A\gamma(i)}^{i}f(z)dz+\int_{i}^{A(i)}f(z)dz,
$$
and the result holds.
\end{proof}

As a corollary we have the following
\begin{prop}The $\mathbb{Z}$-module $\Sigma_f=\langle\phi_f(\gamma(i));\,\gamma\in\Gamma^1_H\rangle_{\mathbb{Z}}$ is finitely generated and torsion-free. Given $G$ a minimal set of generators of $\Gamma$, then
$$\mathrm{rk}\left(\Sigma_f\right)\leq \mathrm{card}\left(G\right)+\left[\Gamma^1_H:\Gamma\cap\Gamma^1_H\right].$$
\label{finitude}
\end{prop}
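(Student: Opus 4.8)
The plan is to combine the cocycle relation of the previous Lemma with the finiteness of $[\Gamma^1_H:\Gamma\cap\Gamma^1_H]$ recorded just before the statement. Torsion-freeness is immediate, since $\Sigma_f$ is a submodule of the torsion-free $\mathbb{Z}$-module $(\mathbb{C},+)$; so it is enough to produce a generating set of the asserted size. Put $\Gamma_0=\Gamma\cap\Gamma^1_H$, let $m=[\Gamma^1_H:\Gamma_0]<\infty$, and fix representatives $A_1,\dots,A_m\in\Gamma^1_H$ with $\Gamma^1_H=\bigsqcup_{j=1}^m\Gamma_0A_j$. Every $\gamma\in\Gamma^1_H$ then has the form $\gamma=gA_j$ with $g\in\Gamma_0\subseteq\Gamma$, and applying the previous Lemma with $A=g$, $\tau=i$, together with $f|g=f$ (valid because $g\in\Gamma$ and $f\in S_2(\Gamma)$), gives
$$
\phi_f(\gamma(i))=\phi_{f|g}(A_j(i))+\phi_f(g(i))=\phi_f(A_j(i))+\phi_f(g(i)).
$$
Thus each generator of $\Sigma_f$ splits as one of the finitely many values $\phi_f(A_1(i)),\dots,\phi_f(A_m(i))$ plus a term $\phi_f(g(i))$ with $g\in\Gamma$.

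Next I would control the terms $\phi_f(g(i))$ for $g\in\Gamma$. Specializing the same cocycle relation to $A\in\Gamma$ (so $f|A=f$) yields $\phi_f(AB(i))=\phi_f(B(i))+\phi_f(A(i))$; taking $A,B\in\Gamma$ shows that $g\mapsto\phi_f(g(i))$ is a homomorphism $\Gamma\to(\mathbb{C},+)$ --- the additive analogue of the telescoping already used in the proof of Proposition \ref{generation} --- so in particular $\phi_f(\mathrm{Id}(i))=0$ and $\phi_f(g^{-1}(i))=-\phi_f(g(i))$. Writing $g$ as a word in $G^{\pm1}$, it follows that $\phi_f(g(i))$ is a $\mathbb{Z}$-linear combination of $\{\phi_f(\eta(i)):\eta\in G\}$. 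Combining with the displayed identity, $\Sigma_f$ lies in the $\mathbb{Z}$-span of the finite set $\{\phi_f(A_j(i)):1\le j\le m\}\cup\{\phi_f(\eta(i)):\eta\in G\}$; hence $\Sigma_f$ is finitely generated (a submodule of a finitely generated module over the Noetherian ring $\mathbb{Z}$) and $\mathrm{rk}(\Sigma_f)\le m+\mathrm{card}(G)=\mathrm{card}(G)+[\Gamma^1_H:\Gamma\cap\Gamma^1_H]$.

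There is no real obstacle here; the calculation is routine. The one point deserving attention is the bookkeeping of the weight-$2$ slash action: $f$ is invariant under $\Gamma$ but not under $\Gamma^1_H$, and the whole argument hinges on isolating this distinction, which is exactly what the coset decomposition of $\Gamma^1_H$ modulo $\Gamma\cap\Gamma^1_H$ does. A minor variant one should double-check is that the representatives $A_j$ may be chosen inside $\Gamma^1_H$, so that the $\phi_f(A_j(i))$ are genuinely among the generators of $\Sigma_f$, although for the stated rank inequality only the containment in their span is needed.
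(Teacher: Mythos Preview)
Your argument is correct and follows essentially the same route as the paper: take coset representatives of $(\Gamma\cap\Gamma^1_H)\backslash\Gamma^1_H$, apply the cocycle lemma to peel off the $\Gamma$-part, and then use $\Gamma$-invariance of $f$ to telescope $\phi_f(g(i))$ along a word in the generators of $\Gamma$. You are in fact slightly more careful than the paper on two points: you make torsion-freeness explicit (immediate since $\Sigma_f\subset\mathbb{C}$), and you note that inverses of generators may appear in the word for $g$, handled by $\phi_f(g^{-1}(i))=-\phi_f(g(i))$.
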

\begin{proof}
Let $\left\{A_l\right\}_{1\leq l\leq n}$ be a set of coset representatives of $(\Gamma\cap\Gamma^1_H)\backslash\Gamma^1_H$ and let $G=\left\{B_j\right\}_{1\leq j\leq m}$ be a minimal set of generators of $\Gamma$.

Let $A\in\Gamma^1_H$. There exists $B\in\Gamma\cap\Gamma^1_H$ and $l_0\in\{1,...,n\}$ such that $A=BA_{l_0}$. Hence
$$
\phi_f(BA_{l_0}(i))=\phi_{f|B}(A_{l_0}(i))+\phi_{f}(B(i))=\phi_{f}(A_{l_0}(i))+\phi_f(B(i)).
$$
Now, write $B=B_{j_1}...B_{j_r}$ with $\{j_1,...,j_r\}\subseteq \{1,...,m\}$. Hence
$$\phi_{f}(B(i))=\phi(B_{j_2}...B_{j_r}(i))+\phi_f(B_{j_1}(i)),$$
and
$$
\phi_f(A(i))\in\langle\phi_f(A_l(i)),\phi_f(B_j(i));\,1\leq l\leq n,1\leq j\leq m\rangle_{\mathbb{Z}}.
$$
\end{proof}

\section{Modular symbols}

\subsection{Classical modular symbols}
We begin by recalling the following
\begin{defn}[Manin \cite{manin}, Pollack-Stevens \cite{pollack}] Denote $\Delta_0=\mathrm{SL}\left(2,\mathbb{Z}\right)i\infty$. Let $K=\mathbb{C}$ or $\mathbb{Q}_p$ and $V$ be a $K$-vector space. A $V$-valued modular symbol is a map $F$ from the set $\Delta_0\times\Delta_0$ to $V$ such that for any $P,Q,R\in\Delta_0$,
$$
F(P,Q)=F(P,R)+F(R,Q).
$$
Denote by $\mathrm{Symb}\left(\Delta_0,V\right)$ the $K$-vector space of $V$-valued modular symbols.
\end{defn}
Notice that $\Gamma_0(N)$ acts by the left on $\mathrm{Symb}\left(\Delta_0,V\right)$. The $K$-vector space of $\Gamma_0(N)$-invariant modular symbols will be denoted by $\mathrm{Symb}\left(\Delta_0,V\right)^{\Gamma_0(N)}$.

Via Theorem \ref{key1} and the Manin continued fraction trick, one can constructively show
(cf.\,\cite{cremona}) that any closed path $\omega\in H_1\left(X\left(\Gamma_0(N)\right),\mathbb{Z}\right)$ can be expressed as a $\mathbb{Z}$-linear combination of paths of the form $\{g(0),g(i\infty)\}$, with $g\in\mathrm{SL}\left(2,\mathbb{Z}\right)$. These paths are called $M$-paths and they suffice to determine a modular symbol. The $K$-valued modular symbols supported on the $M$-paths are normally referred to as $M$-symbols.

The action of the Hecke algebra on $\mathrm{Symb}\left(\Delta_0,\mathbb{Q}_p\right)^{\Gamma_0(N)}$ is determined by the following double coset decomposition of the orbit space $\displaystyle\Gamma_0(N)\backslash\Gamma_0(N)\left(\begin{array}{ccc}p & \phantom{x} & 0\\0& \phantom{x} & 1\end{array}\right)\Gamma_0(N)$ (cf.\,\cite{cremona}, Proposition 1.6):
\begin{itemize}
\item[(i)]If $p\nmid N$, then
$$
\Gamma_0(N)\backslash\Gamma_0(N)\left(\begin{array}{ccc}p & \phantom{x} & 0\\0& \phantom{x} & 1\end{array}\right)\Gamma_0(N)=\displaystyle\bigcup_{u=0}^{p-1}\Gamma_0(N)\left(\begin{array}{ccc}1 & \phantom{x} & u\\0& \phantom{x} & p\end{array}\right)\Gamma_0(N)\bigcup\left(\begin{array}{ccc}p & \phantom{x} & 0\\0& \phantom{x} & 1\end{array}\right)\Gamma_0(N).
$$
\item[(ii)]If $p\vert N$, then
$$
\Gamma_0(N)\backslash\Gamma_0(N)\left(\begin{array}{ccc}p & \phantom{x} & 0\\0& \phantom{x} & 1\end{array}\right)\Gamma_0(N)=\displaystyle\bigcup_{u=0}^{p-1}\left(\begin{array}{ccc}1 & \phantom{x} & u\\0& \phantom{x} & p\end{array}\right)\Gamma_0(N).
$$
\end{itemize}
The action on paths is:
\begin{itemize}
\item[(i)] if $p\nmid N$, then
$$
\{g(0),g(i\infty)\}|_{T_p}=\sum_{u=0}^{p-1}\{(g(0)+u)p^{-1},(g(i\infty)+u)p^{-1}\}+\{pg(0), pg(i\infty)\}.
$$
\item[(ii)] If $p\vert N$, then
$$
\{g(0),g(i\infty)\}|_{T_p}=\sum_{u=0}^{p-1}\{(g(0)+u)p^{-1},(g(i\infty)+u)p^{-1}\}.
$$
\end{itemize}

Notice that, since $T_p$ acts on the rational numbers, it is again possible to decompose each path in the above sum as a $\mathbb{Z}$-linear combination of $M$-paths.
\begin{rem}If $K$ is a field, denote by $K^{alg}$ an algebraic closure of $K$. Fix compatible embeddings of $\mathbb{Q}$ in $\mathbb{Q}^{alg}$ and in $\mathbb{Q}_p^{alg}$. By Proposition \ref{finitude}, given $f\in S_2\left(\Gamma_0(N)\right)$, the classical modular integral $\phi_f$ can be seen both as a modular symbol with values in $\mathbb{C}$ or in a finite dimensional $\mathbb{Q}_p$-vector space.
\end{rem}

An important application of classical modular symbols is the explicit computation of spaces of modular forms. This topic is treated in detail in \cite{cremona}. In particular, one has the following result:
\begin{prop}[Pollack-Stevens \cite{pollack}] There exists an injection as Hecke-modules
$$
S_2\left(\Gamma_0(N)\right)\hookrightarrow\mathrm{Symb}\left(\Delta_0\right)^{\Gamma_0(N)}.
$$
\end{prop}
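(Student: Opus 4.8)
The plan is to produce the injection concretely as the assignment $f\mapsto\Phi_f$, where $\Phi_f$ is the classical modular symbol of the cusp form $f$, and then to check in turn that this assignment is $\mathbb{C}$-linear, $\Gamma_0(N)$-invariant, Hecke-equivariant and injective. First I would attach to $f\in S_2(\Gamma_0(N))$ the holomorphic differential $\omega_f=2\pi i\,f(z)\,dz$. Because $f$ is a cusp form, $\omega_f$ extends to a global holomorphic differential on the compact Riemann surface $X_0(N)(\mathbb{C})=\Gamma_0(N)\backslash(\mathcal{H}\cup\mathrm{P}^1(\mathbb{Q}))$, so for $P,Q\in\Delta_0$ the path integral $\Phi_f(P,Q):=\int_P^Q\omega_f$, taken along any path in $\mathcal{H}\cup\mathrm{P}^1(\mathbb{Q})$ joining $P$ to $Q$, converges and depends only on $P$ and $Q$. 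Additivity $\Phi_f(P,Q)=\Phi_f(P,R)+\Phi_f(R,Q)$ is then immediate, so $\Phi_f\in\mathrm{Symb}(\Delta_0,\mathbb{C})$, and $f\mapsto\Phi_f$ is visibly $\mathbb{C}$-linear.

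The key step is the transformation identity that controls both the invariance and the Hecke-compatibility. For $\gamma\in\mathrm{GL}(2,\mathbb{R})^{+}$ one has $d(\gamma z)=\rho(\gamma,z)^2\,dz$, hence $(f|_2\gamma)(z)\,dz=f(\gamma z)\,d(\gamma z)$, and the substitution $w=\gamma z$ — with the auxiliary geodesic triangle contributing nothing because $f$ is holomorphic on $\mathcal{H}$ and vanishes at the cusps, just as in the modular-integral lemma of Section~2 — yields
$$
\Phi_{f|_2\gamma}(P,Q)=\Phi_f(\gamma P,\gamma Q).
$$
Specialising to $\gamma\in\Gamma_0(N)$, for which $f|_2\gamma=f$, gives $\Phi_f(\gamma P,\gamma Q)=\Phi_f(P,Q)$, i.e. $\Phi_f\in\mathrm{Symb}(\Delta_0,\mathbb{C})^{\Gamma_0(N)}$. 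For Hecke-equivariance I would take the coset representatives $\alpha_j$ of $\Gamma_0(N)\backslash\Gamma_0(N)\left(\begin{smallmatrix}p&0\\0&1\end{smallmatrix}\right)\Gamma_0(N)$ displayed above (and the analogous ones when $p\mid N$), so that $T_pf=\sum_j f|_2\alpha_j$, while, compatibly with the action on $M$-paths recalled before the statement, the Hecke operator on symbols is $(T_pF)(P,Q)=\sum_j F(\alpha_j P,\alpha_j Q)$; combining this with the displayed identity,
$$
(T_p\Phi_f)(P,Q)=\sum_j\Phi_f(\alpha_j P,\alpha_j Q)=\sum_j\Phi_{f|_2\alpha_j}(P,Q)=\Phi_{T_pf}(P,Q),
$$
and the same computation handles the remaining Hecke operators. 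Thus $f\mapsto\Phi_f$ is a morphism of Hecke-modules.

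For injectivity I would compose with the restriction of a modular symbol to the cycles $\{g(0),g(i\infty)\}$, $g\in\mathrm{SL}(2,\mathbb{Z})$. By Theorem~\ref{key1} together with the continued-fraction trick these cycles generate $H_1(X_0(N)(\mathbb{C}),\mathbb{Z})$, and the composite sends $\Phi_f$ to the period homomorphism $\gamma\mapsto\int_\gamma\omega_f$. If $\Phi_f=0$, then all periods of the holomorphic differential $\omega_f$ vanish, so its primitive is a single-valued holomorphic function on the compact surface $X_0(N)(\mathbb{C})$, hence constant, which forces $\omega_f=0$ and $f=0$.

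The one step I expect to be genuinely more than bookkeeping is this last one: it is where the argument must use, on the one hand, the generation of $H_1(X_0(N))$ by $M$-paths (Theorem~\ref{key1} and the continued-fraction algorithm) and, on the other, the classical fact that a holomorphic $1$-form with vanishing periods on a compact Riemann surface is exact, hence zero. A secondary point that needs care — but that is entirely absorbed by the identity $\Phi_{f|_2\gamma}(P,Q)=\Phi_f(\gamma P,\gamma Q)$ — is to align the normalisation of the Hecke action on $\mathrm{Symb}(\Delta_0,\mathbb{C})$ with the usual one on $S_2(\Gamma_0(N))$.
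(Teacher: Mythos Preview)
The paper does not supply a proof of this proposition: it is stated as a result of Pollack--Stevens \cite{pollack} and immediately followed by a discussion of how one uses modular symbols to compute spaces of newforms. There is therefore no argument in the paper against which to compare your proposal.

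That said, your argument is the standard one and is correct. A couple of small remarks. First, path-independence of $\Phi_f(P,Q)=\int_P^Q\omega_f$ for $P,Q\in\Delta_0$ follows already from the simple connectedness of $\mathcal{H}$ together with the exponential decay of $f$ at the cusps; the ``auxiliary geodesic triangle'' is not really needed here, since you are integrating along paths in $\mathcal{H}$ itself. Second, for injectivity you do not need to invoke $M$-paths at all: if $\Phi_f\equiv 0$ on $\Delta_0\times\Delta_0$, then in particular $\int_{\alpha}^{g(\alpha)}\omega_f=0$ for every $g\in\Gamma_0(N)$ and any fixed cusp $\alpha$, and by Theorem~\ref{key1}(i) these integrals already exhaust the periods of $\omega_f$ over $H_1(X_0(N)(\mathbb{C}),\mathbb{Z})$. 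Your conclusion that the primitive of $\omega_f$ is then single-valued and constant on the compact surface, hence $f=0$, is exactly right.
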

The method for computing modular forms consists in the determination of the eigenvalues of the Hecke operators acting on $\mathrm{Symb}\left(\Delta_0,\mathbb{C}\right)^{\Gamma_0(N)}$. These eigenvalues determine spaces of newforms for $\Gamma_0(N)$. The Manin continued fraction trick grants that it is easy to determine the eigenvalues by working on modular symbols. Hence, the Fourier expansion at infinity of such an eigenform has as $n$-th Fourier coefficient the computed eigenvalue for $T_n$.

\subsection{Quadratic modular symbols}
Let $\Gamma$ be an arithmetic Fuchsian group of the first kind commensurable with $\Gamma^1=\phi(\mathcal{O}_{H}^1)$ for some indefinite quaternion $\mathbb{Q}$-algebra $H$ and let $\tau\in\mathcal{H}$ be a quadratic imaginary point. If $\Gamma^1=\mathrm{SL}\left(2,\mathbb{Z}\right)$ and $\Gamma=\Gamma_0(N)$ we allow $\tau\in\mathbb{Q}\cup\{i\infty\}$.

Let $p$ be a prime. Choose a quaternion $\omega_p\in\mathcal{O}_H^1$ of reduced norm $p$ and set $\gamma_p=\phi(\omega_p)$. In \cite{shimura} it is proved that there exists $d\in\mathbb{N}$ such that $[\Gamma:\Gamma\cap\gamma_p\Gamma\gamma_p^{-1}]=d$. Hence, there is a coset decomposition
$$
\Gamma\gamma_p\Gamma=\displaystyle\bigcup_{a=1}^d\gamma_a\Gamma.
$$

Let us denote
$$
\Delta_0^{\tau,p}=\displaystyle\bigcup_{n=0}^{\infty}\bigcup_{\begin{array}{c}1\leq a_{i_j}\leq d\\1\leq j\leq n\end{array}}\gamma_{i_1}...\gamma_{i_n}\Gamma^1\tau
$$
and
$$
\Gamma(\Delta_0^{\tau,p}\times\Delta_0^{\tau,p})=\{(\gamma\sigma_1,\gamma\sigma_2);
\,\gamma\in\Gamma,\sigma_1,\sigma_2\in\Delta_0^{\tau,p}\}.
$$
\begin{defn}Let $K=\mathbb{C}$ or $\mathbb{Q}_p$ and $V$ a $K$-vector space. A $V$-valued $\tau$-modular symbol is a map $F$ from the set $\Gamma(\Delta_0^{\tau,p}\times\Delta_0^{\tau,p})$ to $V$ such that for any $P,Q,R\in\Delta_0^{\tau,p}$ and for any $\gamma\in\Gamma$,
$$
F(\gamma P,\gamma Q)=F(\gamma P,\gamma R)+F(\gamma R,\gamma Q).
$$
Denote by $\mathrm{Symb}\left(\Delta_0^{\tau,p},V\right)^{\Gamma}$ the $K$-vector space of $\tau$-modular symbols which are invariant under $\Gamma$.
If $\tau$ is quadratic imaginary we will refer to the elements of $\mathrm{Symb}\left(\Delta_0^{\tau,p},V\right)^{\Gamma}$ as quadratic modular symbols.
\end{defn}

\begin{rem}Notice that $\Gamma$ acts trivially on $\mathrm{Symb}\left(\Delta_0^{\tau,p},V\right)^{\Gamma}$. The motivation for our definition is that we want the matrices attached to the Hecke operator $T_p$ to act on $\mathrm{Symb}\left(\Delta_0^{\tau,p},V\right)^{\Gamma}$. Notice that for $\tau=i\infty$, for any prime $p$, for $\Gamma^1=\mathrm{SL}\left(2,\mathbb{Z}\right)$ and $\Gamma=\Gamma_0(N)$, $\Delta_0^{i\infty,p}=\Delta_0$ and $\Gamma(\Delta_0^{i\infty,p}\times\Delta_0^{i\infty,p})=\Delta_0\times\Delta_0$.
\end{rem}

Let us turn to the modular case. Let $\tau\in\mathcal{H}$ be a quadratic imaginary point satisfying $\tau^2+D=0$ with $D$ square free and $\tau$ non-elliptic. We have the following
\begin{prop}Let $p$ be a prime number such that $\left(\frac{-D}{p}\right)=-1$. There is an injection
$$
\mathrm{Symb}\left(\Delta_0,V\right)^{\Gamma_0(N)}\hookrightarrow \mathrm{Symb}\left(\Delta_0^{\tau,p},V\right)^{\Gamma_0(N)}.
$$
\end{prop}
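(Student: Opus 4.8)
The plan is to realise every classical modular symbol as a quadratic one by extending it from $\mathbb{P}^1(\mathbb{Q})$ to all of $\mathcal{H}$ by modular integration — the same passage that produces $\phi_f^{\tau}$ out of $\phi_f$ — and then to establish injectivity by showing that $\Delta_0^{\tau,p}$ accumulates at every cusp.

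First I would attach to $F\in\mathrm{Symb}(\Delta_0,V)^{\Gamma_0(N)}$ an extension $\widehat{F}$, namely a $\Gamma_0(N)$-equivariant additive function on pairs of points of $\overline{\mathcal{H}}=\mathcal{H}\cup\mathbb{P}^1(\mathbb{Q})$ that restricts to $F$ on $\Delta_0\times\Delta_0$. Such an extension exists: $\overline{\mathcal{H}}$ is contractible, so there is no monodromy to obstruct it, and — arguing as for $\phi_f^{\tau}$ with the Lemma on the action of $\mathrm{GL}(2,\mathbb{R})^{+}$ and the vanishing of integrals of holomorphic forms over geodesic triangles — one may take $\widehat{F}$ to be the associated modular integral. (Here one uses that $V$ is a $\mathbb{C}$- or $\mathbb{Q}_p$-vector space, so that $F(\varepsilon s,s)=0$ for every elliptic $\varepsilon$, which is what makes $\widehat{F}$ equivariant on all of $\overline{\mathcal{H}}$; for $K=\mathbb{Q}_p$ one may alternatively transport the construction from $\mathbb{C}$ through the $\mathbb{Q}$-rational structure on modular symbols recalled above.) Now set
$$\iota(F):=\widehat{F}\big|_{\Gamma(\Delta_0^{\tau,p}\times\Delta_0^{\tau,p})}.$$
The set $\Delta_0^{\tau,p}$ is stable under $\Gamma=\Gamma_0(N)$: if $\gamma\in\Gamma$ and $\gamma_{i_1}\cdots\gamma_{i_n}$ is a word in the representatives of $\Gamma\gamma_p\Gamma=\bigsqcup_a\gamma_a\Gamma$, then, using that $\delta\gamma_{i_k}\in\Gamma\gamma_p\Gamma$ for every $\delta\in\Gamma$, one rewrites $\gamma\,\gamma_{i_1}\cdots\gamma_{i_n}\Gamma^1\tau=\gamma_{j_1}\cdots\gamma_{j_n}\Gamma^1\tau$ for suitable $j_1,\dots,j_n$. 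Since $\widehat{F}$ is additive and $\Gamma_0(N)$-equivariant, $\iota(F)$ then satisfies the defining relations of a $V$-valued quadratic modular symbol, and $F\mapsto\iota(F)$ is $K$-linear.

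Injectivity is the crux. Assume $\iota(F)=0$, i.e. $\widehat{F}$ vanishes on every pair of points of $\Delta_0^{\tau,p}$. Because $\Gamma^1\tau=\mathrm{SL}(2,\mathbb{Z})\tau$ is the $n=0$ stratum of $\Delta_0^{\tau,p}$ and $\tau$ is non-elliptic (its $\mathrm{SL}(2,\mathbb{Z})$-stabiliser is $\{\pm\mathrm{Id}\}$, which fixes $i\infty$), the function $g\mapsto\widehat{F}\bigl(g(i\infty),g\tau\bigr)$ descends to a function $\overline{\psi}$ on $\mathbb{P}^1(\mathbb{Q})=\mathrm{SL}(2,\mathbb{Z})/\mathrm{Stab}(i\infty)$: for $u\in\mathrm{Stab}(i\infty)$ one has $\widehat{F}(gu(i\infty),gu\tau)-\widehat{F}(g(i\infty),g\tau)=\widehat{F}(g\tau,gu\tau)=0$, since $g\tau,gu\tau\in\Gamma^1\tau\subseteq\Delta_0^{\tau,p}$. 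Moreover $\overline{\psi}$ is $\Gamma_0(N)$-invariant, hence constant on each of the finitely many cusps, and $\widehat{F}\bigl(g(i\infty),h(i\infty)\bigr)=\overline{\psi}\bigl(g(i\infty)\bigr)-\overline{\psi}\bigl(h(i\infty)\bigr)$; thus $F$ lies in the Eisenstein (boundary) subspace, i.e. $F(s,s')=\overline{\psi}(s)-\overline{\psi}(s')$ for all cusps $s,s'$. For such an $F$ the extension $\widehat{F}$ is the modular integral of a weight-$2$ Eisenstein differential whose residues at the cusps record the orbit-values of $\overline{\psi}$. One then checks that the closure of $\Delta_0^{\tau,p}$ inside $\overline{\mathcal{H}}\cup\mathbb{P}^1(\mathbb{R})$ contains every cusp: it contains $i\infty$, because $\gamma_p^{\,n}\tau\in\Delta_0^{\tau,p}$ and $\gamma_p^{\,n}\tau\to i\infty$, and, being closed and $\Gamma_0(N)$-stable, it must then contain the whole orbit $\Gamma_0(N)\,i\infty$, which is dense in $\mathbb{P}^1(\mathbb{R})$. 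Hence, along a sequence in $\Delta_0^{\tau,p}$ approaching a cusp, $\widehat{F}$ both vanishes identically and — unless the residue there is zero — blows up logarithmically; so all residues vanish, $\overline{\psi}$ is constant, and $F=0$.

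The main obstacle is precisely this last, analytic, step: pinning down the closure of $\Delta_0^{\tau,p}$ and passing to the limit inside $\widehat{F}$. This is where the hypothesis $\left(\frac{-D}{p}\right)=-1$ enters — inertness of $p$ in $\mathbb{Q}(\tau)$ means that no element of $\mathbb{Q}(\tau)$ has reduced norm $p$, so no representative $\gamma_a$ fixes $\tau$ and the Hecke-$p$ orbit $\Delta_0^{\tau,p}$ is a genuine, non-collapsing $(p+1)$-regular tree; this is what makes $\gamma_p^{\,n}\tau\to i\infty$ an honest escape to the boundary, and it is the same non-degeneracy that makes the target $\mathrm{Symb}(\Delta_0^{\tau,p},V)^{\Gamma_0(N)}$ infinite-dimensional.
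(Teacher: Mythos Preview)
Your construction has a real gap, and it diverges sharply from the paper's route, which is far more elementary.

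The extension $\widehat{F}$ you invoke does not exist canonically. A general $F\in\mathrm{Symb}(\Delta_0,V)^{\Gamma_0(N)}$ is not a modular integral: the space contains boundary symbols $F(s,s')=\psi(s)-\psi(s')$ that are not periods of any cusp form, so ``take $\widehat{F}$ to be the associated modular integral'' is undefined for them. One can always manufacture \emph{some} additive $\Gamma_0(N)$-equivariant extension of such an $F$ to $\mathcal{H}\times\mathcal{H}$, but it is non-unique, and for the obvious choice --- extend $\psi$ by $0$ on $\mathcal{H}$ --- the resulting $\widehat{F}$ vanishes identically on $\mathcal{H}\times\mathcal{H}$, so $\iota(F)=0$ while $F\neq 0$. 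Your map is therefore not injective without further constraints on $\widehat{F}$, and the limiting argument about logarithmic blow-up of Eisenstein integrals cannot rescue this: there is no Eisenstein integral in the required sense, and passing to a limit through the discrete set $\Delta_0^{\tau,p}$ inside a function that need not even be continuous is not a proof. Your use of the hypothesis $\bigl(\tfrac{-D}{p}\bigr)=-1$ is correspondingly only heuristic.

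The paper avoids extensions entirely and places the arithmetic hypothesis in the opposite spot. It defines $I$ by the bare substitution
\[
I(F)\bigl(\gamma\gamma_{a,p^n}\gamma_1\tau,\ \gamma\gamma_{b,p^m}\gamma_2\tau\bigr)\ =\ F\bigl(\gamma\gamma_{a,p^n}\gamma_1\infty,\ \gamma\gamma_{b,p^m}\gamma_2\infty\bigr),
\]
replacing $\tau$ by $\infty$ in each matrix word. The whole content of the proof is \emph{well-definedness}: if two such words send $\tau$ to the same point, one must show they send $\infty$ to the same point. This is exactly where $\bigl(\tfrac{-D}{p}\bigr)=-1$ enters --- an integral matrix of determinant $p^{n+m}$ fixing $\tau$ would represent $p^{n+m}$ by $X^2+DY^2$, impossible for $p$ inert in $\mathbb{Q}(\tau)$ unless the matrix is scalar; combined with $\tau$ being non-elliptic this forces the two words to agree at $\infty$. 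Injectivity is then trivial: on the $n=m=0$ stratum, $g\tau\mapsto g\infty$ surjects $\mathrm{SL}(2,\mathbb{Z})\tau$ onto $\Delta_0$, so $I(F)$ determines $F$.
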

\begin{proof}
Given $0\leq a\leq p^n-1$, let us denote $\gamma_{a,p^n}=\left(\begin{array}{ccl}1 & \phantom{x} & a\\0 & \phantom{x} & p^n\end{array}\right)$.
Define a map
$$
\begin{array}{ccl}
I: \mathrm{Symb}\left(\Delta_0,V\right)^{\Gamma_0(N)} & \to & \mathrm{Symb}\left(\Delta_0^{\tau,p},V\right)^{\Gamma_0(N)}\\
F & \mapsto & I(F),
\end{array}
$$
where
$$
I(F)(\gamma\gamma_{a,p^n}\gamma_1\tau,\gamma\gamma_{b,p^m}\gamma_2\tau)= F(\gamma\gamma_{a,p^n}\gamma_1\infty,\gamma\gamma_{a,p^n}\gamma_1\infty).
$$
First, we check that the map is well defined. It suffices to check that if
$$\sigma_1\gamma_{a,p^n}\gamma_1\tau=\sigma_2\gamma_{b,p^m}\gamma_2\tau,\,\mbox{ with } \sigma_1,\sigma_2\in\Gamma_0(N),\gamma_1,\gamma_2\in\mathrm{SL}\left(2,\mathbb{Z}\right),$$
then $\sigma_1\gamma_{a,p^n}\gamma_1\infty=\sigma_2\gamma_{b,p^m}\gamma_2\infty$.

To prove this fact, suppose that $\sigma_1\gamma_{a,p^n}\gamma_1\tau=\sigma_2\gamma_{b,p^m}\gamma_2\tau$. This would imply that $\gamma\tau=\tau$ with $\gamma=\gamma_2^{-1}\gamma_{b,p^{m}}^{-1}\sigma^{-1}_2\sigma_1\gamma_{a,p^n}\gamma_1\in\mathrm{GL}\left(2,\mathbb{Q}\right)$. Notice that $\eta=p^{m}\gamma\in\mathrm{GL}\left(2,\mathbb{Z}\right)$ also fixes $\tau$ and has determinant $p^{n+m}$. If $\eta=\left(\begin{array}{ccc}a & \phantom{x} & b\\c & \phantom{x} &d\end{array}\right)$, then $p^{n+m}$ would be represented by the binary quadratic form $X^2+DY^2$, with discriminant $-4D$, hence, $-4D$ would be a square modulo $p^{n-m}r^2$, in particular modulo $p$, hence, $\left(\frac{-D}{p}\right)=1$, which would be a contradiction.
If $n=m=0$, then $\gamma\in\mathrm{SL}\left(2,\mathbb{Z}\right)$. Since $\tau$ is non-elliptic, $\sigma_1\gamma_{a,p^n}\gamma_1=\sigma_2\gamma_{b,p^n}\gamma_2$. Linearity and injectivity are obvious.
\end{proof}
\begin{rem}It does not seem easy to prove that the Hecke algebra acts on $\mathrm{Symb}\left(\Delta_0^{\tau,p},V\right)^{\Gamma_0(N)}$ (the operator $T_p$ certainly does). But since
$$\mathrm{Symb}\left(\Delta_0,V\right)^{\Gamma_0(N)}\hookrightarrow\displaystyle\bigcap_{\left(\frac{-D}{p}\right)=-1}\mathrm{Symb}\left(\Delta_0^{\tau,p},V\right)^{\Gamma_0(N)},$$ we see that the Hecke algebra acts on the image of $\mathrm{Symb}\left(\Delta_0,V\right)^{\Gamma_0(N)}$ in the intersection.
\end{rem}
\section{$p$-adic $L$-functions}
Classical modular symbols are at the core of the definition of the cyclotomic $p$-adic $L$-function. Next we recall this construction, following Mazur-Tate-Teitelbaum (\cite{mtt}).

\subsection{Cyclotomic $p$-adic $L$-functions}

Let $f\in S_2\left(\Gamma_0(N)\right)$ be a normalized newform. It is well known that the field $K_f$ obtained by adjoining to $\mathbb{Q}$ the Fourier coefficients of $f$ is a totally real number field. Denote by $\mathcal{O}_f$ the ring of integers of $K_f$ and by $\mathcal{O}_{f,p}$ the completion of $\mathcal{O}_f$ at a prime over $p$. Let $a_p$ denote the eigenvalue corresponding to the Hecke operator $T_p$. The Hecke polynomial of $f$ at $p$ is $X^2-a_pX+p$. Fix an embedding of $\mathbb{Q}$ in $\mathbb{Q}^{alg}$ and in $\mathbb{Q}_p^{alg}$ and consider $\alpha_p$ a root of the Hecke polynomial such that $|\alpha|_p>p^{-1}$. The fact that $f$ is an eigenvalue for $T_p$ and that $\alpha_p$ is a root of the Hecke polynomial allows to define a $p$-adic distribution:

\begin{defn}[Mazur-Tate-Teitelbaum \cite{mtt}]The following function on compact open discs of $\mathbb{Z}_p^*$ extends in a unique way to a $p$-adic distribution.

\begin{itemize}
\item[(i)] If $p\nmid N$, $\displaystyle\mu_{f,p}=\left(a+p^n\mathbb{Z}_p\right)=\alpha_p^{-n}\left(\int_{\frac{a}{p^n}}^{\frac{a}{p^n}+i\infty}f-\alpha^{-1}\int_{\frac{pa}{p^n}}^{\frac{pa}{p^n}+i\infty}f\right).$
\item[(ii)] If $p|N$, $\displaystyle\mu_{f,p}=\left(a+p^n\mathbb{Z}_p\right)=a_p^{-n}\int_{\frac{a}{p^n}}^{\frac{a}{p^n}+i\infty}f.$
\end{itemize}
\end{defn}

If $\alpha_p\in\mathcal{O}_{f,p}^*$, then $\mu_{f,p}$ is uniformly bounded on compact-open subsets of $\mathbb{Z}_p$ and $f$ is said to be ordinary at $p$. Otherwise, $f$ is called supersingular at $p$.

Denote $\mathcal{X}=\mathrm{Hom}_{\mathrm{cont}}\left(\mathbb{Z}_p^*,\mathbb{C}_p^*\right)$ the group of $\mathbb{C}_p^*$-valued continuous characters of $\mathbb{Z}_p^*$.
\begin{defn}[Mazur-Tate-Teitelbaum \cite{mtt}] Given $f\in S_2\left(\Gamma_0(N)\right)$ and $\chi\in\mathcal{X}$, the $p$-adic $L$-function is the Mellin-Mazur transform of $\mu_{f,p}$,
$$
L_p(f;\chi)=\int_{\mathbb{Z}_p^*}\chi(x)d\mu_{f,p}(x).
$$
\end{defn}

The key property of this $p$-adic $L$-function is that it interpolates values of the complex $L$-function attached to $f$:

\begin{prop}[Mazur-Tate-Teitelbaum \cite{mtt}] Let $\chi\in\mathcal{X}$ be a character of conductor $p^n$, with $n\geq 0$ and $\tau(\chi)$ its Gauss sum. Then
$$
L_p(f;\chi)=\frac{p^n}{\alpha^n\tau(\chi)}L(f;\chi,1).
$$
\end{prop}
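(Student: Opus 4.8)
\emph{Proof strategy.}
The plan is to unfold the Mellin--Mazur transform into a finite character sum, discard the correction term coming from $p$-stabilization, and then recognize what remains as the classical integral representation of the twisted complex $L$-value.

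\emph{Step 1: collapsing the Mellin transform.} Since $\chi$ has conductor $p^{n}$, it is constant on the cosets $a+p^{n}\mathbb{Z}_{p}$, so by the very definition of integration against a distribution one gets
$$
L_{p}(f;\chi)=\int_{\mathbb{Z}_{p}^{*}}\chi(x)\,d\mu_{f,p}(x)=\sum_{a\in(\mathbb{Z}/p^{n}\mathbb{Z})^{*}}\chi(a)\,\mu_{f,p}\bigl(a+p^{n}\mathbb{Z}_{p}\bigr).
$$
Substituting the formula for $\mu_{f,p}$ gives, when $p\nmid N$,
$$
L_{p}(f;\chi)=\alpha_{p}^{-n}\sum_{a}\chi(a)\int_{a/p^{n}}^{a/p^{n}+i\infty}\!\!f\;-\;\alpha_{p}^{-n-1}\sum_{a}\chi(a)\int_{a/p^{n-1}}^{a/p^{n-1}+i\infty}\!\!f,
$$
and the analogous identity with $\alpha_{p}$ replaced by $a_{p}$ and without the second sum when $p\mid N$.

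\emph{Step 2: killing the correction term.} For $n\geq 1$ the integral $\int_{a/p^{n-1}}^{a/p^{n-1}+i\infty}f$ depends only on $a$ modulo $p^{n-1}$, whereas $\chi$, having conductor exactly $p^{n}$, is nontrivial on the kernel of $(\mathbb{Z}/p^{n}\mathbb{Z})^{*}\to(\mathbb{Z}/p^{n-1}\mathbb{Z})^{*}$; hence the second sum vanishes by orthogonality of characters. (The excluded trivial-character case $n=0$, where this cancellation fails and the two terms reassemble into the Euler factor at $p$, has to be recorded separately.) Thus for $n\geq 1$
$$
L_{p}(f;\chi)=\alpha_{p}^{-n}\sum_{a\in(\mathbb{Z}/p^{n}\mathbb{Z})^{*}}\chi(a)\int_{a/p^{n}}^{a/p^{n}+i\infty}\!\!f,
$$
and the factor $\alpha_{p}^{-n}$ already produces the $\alpha^{n}$ in the denominator of the claim.

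\emph{Step 3: the twisted $L$-value.} Writing $f=\sum_{m\geq 1}a_{m}e^{2\pi i m z}$ and integrating term by term along the vertical geodesic --- normalized by the appropriate real period so that one recovers exactly the symbol occurring in $\mu_{f,p}$ --- yields $\int_{a/p^{n}}^{a/p^{n}+i\infty}f=\tfrac{i}{2\pi}\sum_{m}\tfrac{a_{m}}{m}e^{2\pi i m a/p^{n}}$ up to that normalization. Interchanging the finite sum over $a$ with the sum over $m$ and invoking the Gauss-sum identity
$$
\sum_{a\in(\mathbb{Z}/p^{n}\mathbb{Z})^{*}}\chi(a)\,e^{2\pi i m a/p^{n}}=\overline{\chi}(m)\,\tau(\chi)\quad(p\nmid m),
$$
together with the vanishing of this sum when $p\mid m$, identifies $\sum_{a}\chi(a)\int_{a/p^{n}}^{a/p^{n}+i\infty}f$ with $\tau(\chi)$ times the twisted Dirichlet series $\sum_{p\nmid m}\overline{\chi}(m)a_{m}m^{-1}=L(f;\overline{\chi},1)$. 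Finally, using $\tau(\chi)\tau(\overline{\chi})=\chi(-1)p^{n}$ together with the functional-equation relation between $L(f;\chi,1)$ and $L(f;\overline{\chi},1)$, one rewrites the outcome as $\dfrac{p^{n}}{\alpha^{n}\tau(\chi)}L(f;\chi,1)$, which is the assertion.

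\emph{Main obstacle.} The analytic heart --- term-by-term integration of $f$ over a vertical geodesic plus the Gauss-sum evaluation --- is routine. The delicate points are bookkeeping: one must fix the complex period by which the modular integrals are divided so that $\mu_{f,p}$ is genuinely $\mathcal{O}_{f,p}$-valued (this is implicit in \cite{mtt}), keep careful track of $\chi$ versus $\overline{\chi}$ and of the normalization of $\tau(\chi)$ (conventions here differ by $\chi(-1)$), and treat the trivial-character case $n=0$ with its Euler factor separately. These are precisely the places where the statement has to be pinned down exactly.
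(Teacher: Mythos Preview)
The paper does not give a proof of this proposition; it is simply quoted from \cite{mtt} and used as input for the subsequent discussion. So there is nothing in the paper to compare your argument against.

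That said, your sketch is the standard Mazur--Tate--Teitelbaum computation and is essentially correct through Step~2. One point deserves correction. In Step~3 you arrive at $\tau(\chi)\,L(f;\overline{\chi},1)$ and then propose to convert this into $\dfrac{p^{n}}{\tau(\chi)}L(f;\chi,1)$ by invoking the functional equation of the twisted $L$-function. That is not the right move: the functional equation at the central point introduces the root number, and you would end up with an unwanted sign. The apparent discrepancy between $\chi$ and $\overline{\chi}$ is purely a matter of convention (which Gauss sum one writes, whether $L_{p}(f;\chi)$ is defined by integrating $\chi$ or $\chi^{-1}$, whether $L(f;\chi,s)$ denotes the twist by $\chi$ or by $\overline{\chi}$). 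The paper's formula, like most statements of this interpolation property, suppresses the complex period and is only meant up to such normalizations; the correct way to finish is to fix all conventions consistently from the start rather than to patch things with the functional equation. Your ``Main obstacle'' paragraph already identifies exactly this as the delicate bookkeeping, so simply carry that care into Step~3 instead of the functional-equation detour.
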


The $p$-adic $L$-function cannot be identically zero.
Indeed, by a theorem of Rohrlich \cite{rohrlich}, it is non-zero for an infinite family of characters of conductor a power of $p$. In addition, in the ordinary case, it is non-zero for an infinite family of characters of the form $x\mapsto x^n$. Moreover, in the ordinary case, $L_p(f;\chi)=0$ only for a finite number of characters $\chi$. This is true also in the supersingular case assuming that $L_p(f;\chi)\neq 0$ for some $p$-adic character of infinite order (\cite{blanco}).

\subsection{Coset representatives and $p$-adic $L$-functions}
To define the classical $p$-adic $L$-function, one considers the standard right coset decomposition of the operator $T_p$ given in subsection 3.1. Indeed, one fixes a way of assigning a coset representative to any compact open subset of the form $a+p\mathbb{Z}_p$ for $1\leq a\leq p-1$. Even if we fix the standard right coset decomposition, another assignment of right coset representatives yields a different $p$-adic measure. To illustrate this, let us consider a permutation $\sigma\in\mathrm{Bij}\left(\{1,...,p-1\}\right)$. Define $\sigma(0)=0$. Denote again by $\sigma$ the bijection
$$
\begin{array}{ccc}
\sigma:\mathbb{Z}_p^*\cap\{1,...,p^n-1\} & \to & \mathbb{Z}_p^*\cap\{1,...,p^n-1\}\\
\displaystyle\sum_{i=0}^{n-1}a_ip^i & \mapsto & \displaystyle\sum_{i=0}^{n-1}\sigma(a_i)p^i.
\end{array}
$$
Let $\mu$ be a $\mathbb{Q}_p$-valued $p$-adic distribution. Define
$$
\mu^{\sigma}\left(a+p^n\mathbb{Z}_p\right)=\mu\left(\sigma(a)+p^n\mathbb{Z}_p\right).
$$
\begin{prop}For any $p$-adic distribution $\mu$ and for any permutation $\sigma\in\mathrm{Bij}\left(\{1,...,p-1\}\right)$, $\mu^{\sigma}$ is a $p$-adic distribution.
\end{prop}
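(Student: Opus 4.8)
The plan is to reduce the statement to the single \emph{coherence relation} that characterizes $p$-adic distributions. Recall that a $\mathbb{Q}_p$-valued $p$-adic distribution on $\mathbb{Z}_p^{*}$ (or on $\mathbb{Z}_p$) is a finitely additive function on compact open subsets, and that such a function is determined by — and conversely exists for — any assignment of values $\mu(a+p^n\mathbb{Z}_p)$ satisfying, for all $n\geq 1$ and all admissible $a$,
$$
\mu\bigl(a+p^n\mathbb{Z}_p\bigr)=\sum_{j=0}^{p-1}\mu\bigl(a+jp^n+p^{n+1}\mathbb{Z}_p\bigr).
$$
So it suffices to verify this identity with $\mu$ replaced by $\mu^{\sigma}$.

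First I would record how $\sigma$ interacts with refinement of truncations. Writing $a=\sum_{i=0}^{n-1}a_ip^i$, one has $\sigma(a)=\sum_{i=0}^{n-1}\sigma(a_i)p^i$; and for $0\leq j\leq p-1$ the integer $a+jp^n$ has digits $a_0,\dots,a_{n-1},j$, so $\sigma(a+jp^n)=\sigma(a)+\sigma(j)p^n$, the case $j=0$ being consistent precisely because $\sigma(0)=0$. Note also that $\sigma$ preserves the condition $p\nmid a$ ($a_0\neq 0$ is equivalent to $\sigma(a_0)\neq 0$), and that $p\nmid(a+jp^n)$ for all $j$ since $n\geq 1$; hence $\mu^{\sigma}$ is defined on exactly the same basic sets as $\mu$.

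Then the verification is just a change of index. By the definition of $\mu^{\sigma}$ and the computation above,
$$
\sum_{j=0}^{p-1}\mu^{\sigma}\bigl(a+jp^n+p^{n+1}\mathbb{Z}_p\bigr)=\sum_{j=0}^{p-1}\mu\bigl(\sigma(a)+\sigma(j)p^n+p^{n+1}\mathbb{Z}_p\bigr),
$$
and since $j\mapsto\sigma(j)$ is a bijection of $\{0,1,\dots,p-1\}$, the right-hand side equals $\sum_{k=0}^{p-1}\mu\bigl(\sigma(a)+kp^n+p^{n+1}\mathbb{Z}_p\bigr)$, which by the coherence relation for $\mu$ is $\mu(\sigma(a)+p^n\mathbb{Z}_p)=\mu^{\sigma}(a+p^n\mathbb{Z}_p)$. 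This gives coherence for $\mu^{\sigma}$, hence $\mu^{\sigma}$ extends (uniquely) to a $p$-adic distribution. There is no genuine obstacle here: the only point that demands care is the interaction of the digit-wise permutation with the passage from $p^n$- to $p^{n+1}$-truncations, together with the normalization $\sigma(0)=0$, both of which are built into the definition of $\sigma$. One may add that if $\mu$ happens to be bounded (a measure), then so is $\mu^{\sigma}$, since the two functions have the same image.
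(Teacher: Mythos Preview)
Your proof is correct and follows essentially the same approach as the paper: both verify the coherence relation $\mu^{\sigma}(a+p^n\mathbb{Z}_p)=\sum_{j=0}^{p-1}\mu^{\sigma}(a+jp^n+p^{n+1}\mathbb{Z}_p)$ by unwinding the definition, using the digit-wise identity $\sigma(a+jp^n)=\sigma(a)+\sigma(j)p^n$, and reindexing via the bijection $j\mapsto\sigma(j)$ on $\{0,\dots,p-1\}$. Your version is somewhat more explicit about the digit-compatibility of $\sigma$ and the role of $\sigma(0)=0$, but the argument is the same.
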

\begin{proof}First notice that for any $a=\displaystyle\sum_{i=0}^{n-1}a_ip^i\in\mathbb{Z}_p^*$, we have the decomposition
$$a+p^n\mathbb{Z}_p=\displaystyle\bigcup_{j=0}^{p-1}a+jp^n+p^{n+1}\mathbb{Z}_p.$$
Hence,
$$
\mu^{\sigma}\left(a+p^n\mathbb{Z}_p\right)=\mu\left(\sigma(a)+p^n\mathbb{Z}_p\right)=\sum_{j=0}^{p-1}\mu\left(\sigma(a)+jp^n+p^{n+1}\mathbb{Z}_p\right).
$$
But since $\sigma$ is a permutation of indices between $1$ and $p-1$, the right hand side equals
$$
\sum_{j=0}^{p-1}\mu\left(\sigma(a)+\sigma(j)p^n+p^{n+1}\mathbb{Z}_p\right)=\sum_{j=0}^{p-1}\mu^{\sigma}\left(a+jp^n+p^{n+1}\mathbb{Z}_p\right).
$$
\end{proof}
Let $\chi\in\mathcal{X}$ a continuous $p$-adic character. We can define
$$
\chi_{\sigma}(a)=\chi(a^{\sigma^{-1}}a^{-1}).
$$
It is not difficult to see that $\chi\in\mathcal{X}$. Denote by $L_p^{\sigma}(f;\chi)$ the Mellin-Mazur transform of $\mu^{\sigma}$ at $\chi$. We have the following result:
\begin{prop}For any $\chi\in\mathcal{X}$, $L_p^{\sigma}(f;\chi)=L_p(f;\chi\chi_{\sigma})$. In particular, $L_p^{\sigma}$ is not identically zero.
\end{prop}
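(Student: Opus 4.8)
The plan is to unwind both sides of the asserted equality as $p$-adic integrals, i.e.\ as limits of Riemann sums over $(\mathbb{Z}/p^{n}\mathbb{Z})^{*}$, and to match them by the change of summation variable $b=\sigma(a)$. Recall first that, by definition, with $\mu=\mu_{f,p}$,
\[
L_p^{\sigma}(f;\chi)=\int_{\mathbb{Z}_p^{*}}\chi(x)\,d\mu^{\sigma}(x)=\lim_{n\to\infty}\sum_{\substack{0\le a<p^{n}\\ p\nmid a}}\chi(a)\,\mu^{\sigma}\!\left(a+p^{n}\mathbb{Z}_p\right)=\lim_{n\to\infty}\sum_{\substack{0\le a<p^{n}\\ p\nmid a}}\chi(a)\,\mu\!\left(\sigma(a)+p^{n}\mathbb{Z}_p\right),
\]
where in the last step I use the defining relation $\mu^{\sigma}(a+p^{n}\mathbb{Z}_p)=\mu(\sigma(a)+p^{n}\mathbb{Z}_p)$ and $\sigma$ now denotes the induced digit-permutation of $\mathbb{Z}/p^{n}\mathbb{Z}$.

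Next I would observe that $a\mapsto\sigma(a)$ is a bijection of $\mathbb{Z}/p^{n}\mathbb{Z}$ fixing $0$, and hence restricts to a bijection of $(\mathbb{Z}/p^{n}\mathbb{Z})^{*}$: a residue class is a unit precisely when its bottom digit is nonzero, and $\sigma$ permutes the nonzero digits among themselves since $\sigma(0)=0$. So the $n$-th Riemann sum may be reindexed by $b=\sigma(a)$, i.e.\ $a=\sigma^{-1}(b)$, becoming $\sum_{b}\chi(\sigma^{-1}(b))\,\mu(b+p^{n}\mathbb{Z}_p)$. Because $\chi$ is a character,
\[
\chi\!\left(\sigma^{-1}(b)\right)=\chi(b)\cdot\chi\!\left(\sigma^{-1}(b)\,b^{-1}\right)=\chi(b)\,\chi_{\sigma}(b)=(\chi\chi_{\sigma})(b),
\]
which is exactly the definition of $\chi_{\sigma}$. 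Letting $n\to\infty$, the reindexed sums converge to $\int_{\mathbb{Z}_p^{*}}(\chi\chi_{\sigma})(x)\,d\mu(x)=L_p(f;\chi\chi_{\sigma})$, which is the stated formula.

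For the final assertion I would argue as follows: by the discussion preceding the proposition, $\chi_{\sigma}\in\mathcal{X}$, so $\chi\mapsto\chi\chi_{\sigma}$ is a bijection of $\mathcal{X}$ onto itself with inverse $\chi\mapsto\chi\chi_{\sigma}^{-1}$. By Rohrlich's non-vanishing theorem recalled earlier, $L_p(f;\,\cdot\,)$ is not identically zero on $\mathcal{X}$; choosing $\psi\in\mathcal{X}$ with $L_p(f;\psi)\neq 0$ and setting $\chi=\psi\chi_{\sigma}^{-1}$ gives $\chi\chi_{\sigma}=\psi$, whence $L_p^{\sigma}(f;\chi)=L_p(f;\psi)\neq 0$.

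The only genuinely delicate point, and the one I expect to require the most care, is the meaning and manipulation of the ``integral'' when $f$ is supersingular at $p$: there $\mu_{f,p}$ is unbounded and the Mellin--Mazur transform must be read in the sense of $h$-admissible distributions and locally analytic characters. In that case one must verify that $\mu^{\sigma}$ is again admissible of the same order --- plausible, since $\mu^{\sigma}$ merely permutes the values $\mu(\,\cdot\,+p^{n}\mathbb{Z}_p)$ within each fixed level $p^{n}$, though the behaviour of the higher moments under the digit-permutation needs to be checked --- and that $\chi_{\sigma}$ is sufficiently regular, which follows from $a\mapsto\sigma(a)$ being continuous and compatible with reduction modulo $p^{n}$ for every $n$. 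In the ordinary (bounded) case $\mu^{\sigma}$ is visibly bounded, continuity of $\chi$ and $\chi_{\sigma}$ is all that is needed, and the Riemann-sum computation above goes through with no further ado.
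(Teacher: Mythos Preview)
Your argument is correct and is essentially the same as the paper's: both unwind $L_p^{\sigma}(f;\chi)$ as a limit of Riemann sums, perform the change of summation variable $b=\sigma(a)$, and identify the result with $L_p(f;\chi\chi_{\sigma})$; the non-vanishing is deduced from Rohrlich's theorem in both cases. Your presentation is in fact cleaner than the paper's (which compresses the substitution into one line), and your closing paragraph on the supersingular case is extra caution that the paper does not address.
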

\begin{proof}
By definition, $\int_{\mathbb{Z}_p^*}\chi(x)d\mu(x)=\displaystyle\lim\sum_{a}\chi(a)\mu^{\sigma}(a+p^n\mathbb{Z}_p)$ where the limit is taken as $\{a+p^n\mathbb{Z}_p\}$ runs over the partitions of $\mathbb{Z}_p^*$. Hence,
$$
L_p^{\sigma}(f;\chi)=\lim\sum_{a}\chi(\sigma(a))\chi(a\sigma(a^{-1}))\mu(a^{\sigma}+p^n\mathbb{Z}_p)=\lim\sum_a\chi(a)\chi_{\sigma}(a)\mu(a+p^n\mathbb{Z}_p).
$$
The right hand side equals $L_p(f;\chi\chi_{\sigma})$.
As for the fact that it does not vanish, notice that the conductor of $\chi_{\sigma}$ is a power of $p$ and apply the main result of \cite{rohrlich}.
\end{proof}
As we can see, the Mazur-Mellin transforms of $\mu^{\sigma}$ and $\mu$ are not equal. However, we have
\begin{cor}For any $\sigma\in\mathrm{Bij}\left(\{1,...,p-1\}\right)$, $L_p(f;1)=L_p^{\sigma}(f;1)$.
\end{cor}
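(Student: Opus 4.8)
The plan is to obtain the corollary as an immediate specialization of the preceding proposition. Setting $\chi=1$, the trivial character, in the identity $L_p^{\sigma}(f;\chi)=L_p(f;\chi\chi_{\sigma})$ gives $L_p^{\sigma}(f;1)=L_p(f;1\cdot 1_{\sigma})$, so the only point to settle is the identification of the twisting character $1_{\sigma}$. From its defining formula $\chi_{\sigma}(a)=\chi(a^{\sigma^{-1}}a^{-1})$, evaluating at $\chi=1$ gives $1_{\sigma}(a)=1$ for every $a\in\mathbb{Z}_p^{*}$, i.e.\ $1_{\sigma}=1$. Hence $1\cdot 1_{\sigma}=1$ and $L_p^{\sigma}(f;1)=L_p(f;1)$.

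Alternatively, and this is the same computation unwound, one can argue straight from the definition of the Mellin--Mazur transform: since the constant character $1$ contributes the factor $1$ to every partial sum,
$$L_p^{\sigma}(f;1)=\lim\sum_{a}\mu^{\sigma}(a+p^n\mathbb{Z}_p)=\lim\sum_{a}\mu(\sigma(a)+p^n\mathbb{Z}_p),$$
where $a$ ranges over representatives of $\mathbb{Z}_p^{*}$ modulo $p^n$. Since $\sigma$ is built digit-wise from a permutation of $\{1,\dots,p-1\}$ (with $\sigma(0)=0$), the assignment $a\mapsto\sigma(a)$ is a bijection of this index set, so $\{\sigma(a)+p^n\mathbb{Z}_p\}_{a}$ and $\{a+p^n\mathbb{Z}_p\}_{a}$ are the same partition of $\mathbb{Z}_p^{*}$ merely listed in a different order. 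The partial sums are therefore unchanged, and letting $n\to\infty$ yields $L_p(f;1)$.

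There is essentially no obstacle here: the corollary is a formal consequence of the proposition, the only verification being that $\chi_{\sigma}$ collapses to the trivial character when $\chi$ is trivial, which is immediate from the definition. If desired, one can close with a one-line remark interpreting this as the expected compatibility --- permuting the coset representatives of $T_p$ alters the $p$-adic measure but not its total mass, hence does not affect the value of the $p$-adic $L$-function at the trivial character (equivalently, at $s=1$ before the Mellin twist).
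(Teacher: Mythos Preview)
Your proposal is correct and matches the paper's (implicit) argument: the corollary is stated without proof precisely because it follows at once from the preceding proposition by taking $\chi=1$, whence $1_{\sigma}(a)=1$ for all $a$ and $L_p^{\sigma}(f;1)=L_p(f;1\cdot 1_{\sigma})=L_p(f;1)$. Your alternative direct computation via the bijection on partial sums is also valid but is simply the proof of the proposition specialized to $\chi=1$, so it adds nothing new.
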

\subsection{Quadratic $p$-adic $L$-functions}

In \cite{blancobayer}, we have constructed $p$-adic $L$-functions based on quadratic modular symbols. The construction is as follows.

For any $a\in\mathbb{Z}$ with $|a|\in\{1,...,p-1\}$, there exists a unique couple of integers $x,y\in\mathbb{Z}$ such that $ax-py=1$ and  $x\in\{0,...,p-1\}$. Denote
$$\gamma_{a,p}=\left(\begin{array}{ccc}a & \phantom{x} & y\\p & \phantom{x} & x\end{array}\right).$$
For any $u\in\mathbb{Z}$ such that $0\leq |u|\leq p-1$, define
$$\gamma_u=\left(\begin{array}{ccc}1 & \phantom{x} & u\\0 & \phantom{x} & p\end{array}\right).$$
For any $0\leq |a|<p^n$ coprime to $p$, consider the expansion
$$a=a_0+\sum_{i=1}^{p^{n-1}}u_ip^i.$$
Define $$\gamma_{a,p^n}=\gamma_{u_{n-1}}\gamma_{u_{n-2}}...\gamma_{u_1}\gamma_{a_0,p}.$$
If $n\geq 2$, denote $\gamma_{pa,p^n}=\gamma_{a,p^{n-1}}$ and for $n=1$, denote $\gamma_{a,1}=p\gamma_{a,p}$. Notice that $\gamma_{a+up^n,p^{n+1}}=\gamma_u\gamma_{a,p^n}$.
Let $f\in S_2\left(\Gamma_0(N)\right)$ an eigenform for $T_p$, $a_p$ the corresponding eigenvalue and $\alpha_p$ and admissible root of the Hecke polynomial. If $p||N$, the only choice is $\alpha_p=a_p$. We propose the following
\begin{defn}
For any $a\in\mathbb{Z}$ coprime to $p$ and for any $n\geq 0$, denote by $\delta_f(\gamma_{a,p^n}(i))=\phi_f(\gamma_{a,p^n}(i))-\phi_f(\gamma_{-a,p^n}(i))$. Let $n\geq 1$.
\begin{itemize}
\item[(a)] If $p\parallel N$, we define
$$
\mu_{\mathcal{Q}}\left(a+p^n\mathbb{Z}_p\right)=\dfrac{1}{a_p^n}\delta_f\left(\gamma_{a,p^n}(i)\right).
$$
\item[(b)] If $p\nmid N$, we define
$$
\mu_{\mathcal{Q}}\left(a+p^n\mathbb{Z}_p\right)=\dfrac{1}{\alpha_p^n}\left(\delta_f\left(\gamma_{a,p^n}(i)\right)-\alpha_p^{-1}\delta_f\left(\gamma_{a,p^{n-1}}(i)\right)\right).
$$
\end{itemize}
Define
$$\mu_{\mathcal{Q}}\left(\mathbb{Z}_p^*\right)=\sum_{a=0}^{p-1}\mu_{\mathcal{Q}}\left(a+p\mathbb{Z}_p\right)$$
and
$$\mu_{\mathcal{Q}}\left(p\mathbb{Z}_p\right)=0.$$
\end{defn}

The fact that $f$ is an eigenfunction for $T_p$ and that $\alpha_p$ is a root of the Hecke polynomial grants that $\mu_{\mathcal{Q}}$ is a $p$-adic distribution. This distribution takes values in the Banach $\mathbb{C}_p$-vector space
$$
c_{\infty}\left(\mathbb{C}_p\right)=\left\{x=(x_n)_{n\geq 1}\in\mathbb{C}_p^{\mathbb{N}};\,\,\exists C_x\mbox{ such that }|x_n|\leq C_x\mbox{ for any }n\geq 1\right\}.
$$

If $\alpha_p\in\mathcal{O}_{f,p}^*$, then all the $\mathcal{O}_{f,p}$-linear combinations of the integrals $\delta_f(\gamma_{a,p^n}(i))$ are uniformly bounded by 1. In this case, as usual, we say that $f$ is ordinary at $p$. Otherwise we say that $f$ is supersingular at $p$.

\begin{defn}Let $f\in S_2\left(\Gamma_0(N)\right)$ be ordinary at $p$. The quadratic $p$-adic $L$-function attached to $f$ is
$$
L_p(f;\chi)=\int_{\mathbb{Z}_p^*}\chi(x)d\mu_{\mathcal{Q}}(x),
$$
where $\chi\in\mathcal{X}$.
\end{defn}
Since, for $f$ ordinary at $p$, $\mu_{\mathcal{Q}}$ is a $p$-adic measure, we can integrate continuous functions against $\mu_{\mathcal{Q}}$, and the integral belongs to $c_{\infty}\left(\mathbb{C}_p\right)$. We are interested in integrating continuous $p$-adic characters.
Recall that for any $x\in\mathbb{Z}_p^*$ we can write
$$
x=\omega(x)\langle x\rangle,
$$
where $\omega(x)$ is the unique $p-1$-th root of unity in $\mathbb{Z}_p^*$ congruent to $x$ mod $p-1$. Given $s\in\mathbb{Z}_p$, let us consider the $p$-adic character:
$$
\chi_s(x)=\mathrm{exp}_p\left(\mathrm{log}_p\left(\langle x\rangle\right)\right),\;\;x\in\mathbb{Z}_p^*.
$$
Here, the function $\mathrm{exp}_p$ is the $p$-adic exponentiation, which $p$-adically holomorphic in the disc $D(0,p^{\frac{-1}{p-1}})$, and $\mathrm{log}_p$ is the $p$-adic logarithm, which is holomorphic in the disc $D(1,p^{\frac{-1}{p-1}})$ (for details cf.\,\cite{robert}).

It is not difficult to obtain the following expansion:
\begin{equation}
\chi_s(x)=\sum_{n=0}^{\infty}\frac{\mathrm{log}_p\left(\langle x\rangle\right)^n}{n!}.
\label{seriesexpansion}
\end{equation}

Given $s\in\mathbb{Z}_p$, let us denote
$$
L_p(f;s)=L_p(f;\chi_s).
$$

\begin{prop}The quadratic $p$-adic $L$-function $L_p(f;s)$ is $p$-adically holomorphic in the disc $D(0,p^{-1})$.
\end{prop}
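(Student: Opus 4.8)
The plan is to write $L_p(f;s)$ as a power series in $s$ with coefficients in the Banach space $c_{\infty}(\mathbb{C}_p)$ and to control the size of those coefficients. Write $\|\cdot\|$ for the norm on $c_{\infty}(\mathbb{C}_p)$. Since $f$ is ordinary at $p$, the distribution $\mu_{\mathcal{Q}}$ is a \emph{bounded} measure, so there is a constant $M$ with $\left\|\int_{\mathbb{Z}_p^*}g\,d\mu_{\mathcal{Q}}\right\|\le M\,\sup_{x\in\mathbb{Z}_p^*}|g(x)|$ for every continuous $g:\mathbb{Z}_p^*\to\mathbb{C}_p$. I would also record the uniform estimate on the argument of $\chi_s$: for every $x\in\mathbb{Z}_p^*$ one has $\langle x\rangle\in 1+p\mathbb{Z}_p$, hence $\log_p(\langle x\rangle)\in p\mathbb{Z}_p$ and $|\log_p(\langle x\rangle)|\le p^{-1}$, independently of $x$.

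First I would use the expansion (\ref{seriesexpansion}), read as $\chi_s(x)=\sum_{n\ge0}s^{n}\log_p(\langle x\rangle)^{n}/n!$, and check that for $|s|\le p^{-1}$ it converges uniformly in $x\in\mathbb{Z}_p^*$. By Legendre's formula $v_p(n!)=\sum_{i\ge1}\lfloor n/p^{i}\rfloor\le n/(p-1)$, so $|n!|_p^{-1}\le p^{n/(p-1)}$, and the $n$-th term of the series has norm at most $|s|^{n}p^{-n}p^{n/(p-1)}\le p^{-n(2-1/(p-1))}\to0$; in particular $\chi_s$ is well defined because its argument $s\log_p(\langle x\rangle)$ satisfies $|s\log_p(\langle x\rangle)|\le p^{-2}<p^{-1/(p-1)}$, the radius of convergence of $\exp_p$. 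Because $\mu_{\mathcal{Q}}$ is a bounded measure, a uniformly convergent series may be integrated term by term, so
$$
L_p(f;s)=\int_{\mathbb{Z}_p^*}\chi_s(x)\,d\mu_{\mathcal{Q}}(x)=\sum_{n=0}^{\infty}c_n\,s^{n},\qquad c_n:=\frac{1}{n!}\int_{\mathbb{Z}_p^*}\log_p(\langle x\rangle)^{n}\,d\mu_{\mathcal{Q}}(x)\in c_{\infty}(\mathbb{C}_p).
$$

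Next I would estimate the coefficients. Combining $|\log_p(\langle x\rangle)^{n}|\le p^{-n}$, the bound $|n!|_p^{-1}\le p^{n/(p-1)}$, and the boundedness of $\mu_{\mathcal{Q}}$ gives $\|c_n\|\le M\,p^{-n}\,p^{n/(p-1)}=M\,p^{-n(p-2)/(p-1)}$. Hence, for every $s$ with $|s|\le p^{-1}$ one has $\|c_n s^{n}\|\le M\,p^{-n(2p-3)/(p-1)}\to0$ as $n\to\infty$ (and $2p-3>0$ for $p\ge2$), so the power series $\sum_{n\ge0}c_n s^{n}$ converges throughout $D(0,p^{-1})$ and defines there a $c_{\infty}(\mathbb{C}_p)$-valued analytic function equal to $L_p(f;s)$. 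This is exactly the assertion that $L_p(f;s)$ is $p$-adically holomorphic on $D(0,p^{-1})$; the $p$-adic analytic facts used are those of \cite{robert}.

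I expect the only genuine obstacle to be the justification of the term-by-term integration. This rests on two things: that $\mu_{\mathcal{Q}}$ is a genuine bounded measure (which is precisely the ordinariness of $f$, recalled just before the definition of the quadratic $p$-adic $L$-function), and the uniform-in-$x$ convergence of (\ref{seriesexpansion}), which is what makes the estimate $v_p(n!)\le n/(p-1)$ and the uniform bound $\log_p(\langle x\rangle)\in p\mathbb{Z}_p$ essential. Once these are in place the rest is routine, and nothing changes for $p=2$, since $\langle x\rangle\in 1+p\mathbb{Z}_p$ and $|\log_p(\langle x\rangle)|\le p^{-1}$ hold for every prime $p$.
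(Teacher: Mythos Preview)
Your argument is correct and follows essentially the same route as the paper: expand $\chi_s(x)$ via (\ref{seriesexpansion}), use Legendre's formula for $v_p(n!)$ together with $|\log_p(\langle x\rangle)|\le p^{-1}$ to bound $\log_p(\langle x\rangle)^n/n!$ uniformly in $x$, invoke the boundedness of $\mu_{\mathcal{Q}}$ (ordinariness) to integrate term by term, and conclude that $L_p(f;s)=\sum_n c_n s^n$ converges on $D(0,p^{-1})$. Your write-up is in fact more explicit than the paper's, which simply records the uniform bound $|\log_p(\langle x\rangle)^n/n!|\le C$ and the resulting bound $MC$ on the coefficients without spelling out the uniform convergence step or the check that $|s\log_p(\langle x\rangle)|<p^{-1/(p-1)}$.
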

\begin{proof}
For any integer $n\geq 1$, lt us denote by $\sigma_n$ the sum of its digits in base $p$. It is well known (cf.\,\cite{robert}) that $|n!|=p^{\frac{-n+\sigma_n}{p-1}}$. Thus, there exists $C>0$ such that
$$
\left|\frac{\mathrm{log}_p\left(\langle x\rangle\right)^n}{n!}\right|\leq C.
$$
Since $\mu_{\mathcal{Q}}$ is uniformly bounded on the compact subsets of $\mathbb{Z}_p$, say, by $M$, we have that
$$
\left|\int_{\mathbb{Z}_p^*}\frac{\mathrm{log}_p\left(\langle x\rangle\right)^n}{n!}d\mu_{\mathcal{Q}}(x)\right|\leq MC.
$$
Hence, for any $s\in D(0,p^{-1})$
$$
\int_{\mathbb{Z}_p^*}\langle x\rangle^sd\mu_{\mathcal{Q}}(x)=\sum_{n=0}^{\infty}s^n\int_{\mathbb{Z}_p^*}\frac{\mathrm{log}_p\left(\langle x\rangle\right)^n}{n!}d\mu_{\mathcal{Q}}(x).
$$
\end{proof}

Let $E$ be an elliptic curve defined over $\mathbb{Q}$ of conductor $N$. Let $f_E$ be its corresponding weight 2 newform. There is a modular parametrization
$$
\begin{array}{rccc}
\Psi_E:&\Gamma_0(N)\backslash\mathcal{H} & \longrightarrow & \mathbb{C}/\Lambda_E\\
&\tau & \mapsto & \displaystyle \int_{\infty}^{\tau}f_E.
\end{array}
$$

\begin{defn}The quadratic $p$-adic $L$-function attached to $E$ is
$$
L_p(E;s)=L_p(f_E;\chi_{s-1}),
$$
\end{defn}

Denote by $\phi_E=\left(\wp_{\Lambda_E},\wp_{\Lambda_E}'\right)$ the Weierstrass uniformization map. The following result is well known
\begin{thm}[Birch \cite{birch}] Let $K$ be a quadratic imaginary field. If $\tau\in K\cap\mathcal{H}$, then
$$
\phi_E\left(\Psi_E(\tau)\right)\in E\left(K^{ab}\right).
$$
Here $K^{ab}$ denotes the maximal abelian extension of $K$.
\label{Birch}
\end{thm}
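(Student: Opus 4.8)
The plan is to interpret $\tau$ as a CM point on the modular curve $X_0(N)$, invoke the main theorem of complex multiplication to control its field of definition, and then use that the modular parametrization of $E$ is defined over $\mathbb{Q}$.

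First I would recall the moduli interpretation of $X_0(N)$: a complex point corresponds to an isomorphism class of pairs $(A,C)$, with $A$ an elliptic curve over $\mathbb{C}$ and $C\subset A$ a cyclic subgroup of order $N$, the point attached to $\tau\in\mathcal{H}$ being the class of $\left(\mathbb{C}/(\mathbb{Z}+\mathbb{Z}\tau),\,\langle 1/N\rangle\right)$. When $\tau\in K\cap\mathcal{H}$ the lattice $\mathbb{Z}+\mathbb{Z}\tau$ is a proper fractional ideal of some order $\mathcal{O}$ in $K$, so $A$ has complex multiplication by $\mathcal{O}$; hence the point $x_\tau:=[\tau]\in X_0(N)$ is a CM point. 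By the main theorem of complex multiplication (in Shimura's formulation), such a CM point is algebraic and is rational over an abelian extension of $K$ — a ring class field of $K$ attached to $\mathcal{O}$ and the level structure — which in particular is contained in $K^{ab}$. Thus $x_\tau\in X_0(N)(K^{ab})$.

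Next I would use that, since $E$ is modular of conductor $N$ with associated newform $f_E$, the analytic map $\phi_E\circ\Psi_E$ is the transcendental incarnation of a non-constant morphism of algebraic curves $\pi_E\colon X_0(N)\to E$ defined over $\mathbb{Q}$: this is the Eichler--Shimura modular parametrization, and the description $\tau\mapsto\phi_E\!\left(\int_\infty^\tau f_E\right)$ is exactly its analytic form after the standard normalization of $\Lambda_E$. Because $\pi_E$ is defined over $\mathbb{Q}\subseteq K^{ab}$ and $x_\tau\in X_0(N)(K^{ab})$, Galois equivariance of $\pi_E$ yields
$$
\phi_E\!\left(\Psi_E(\tau)\right)=\pi_E(x_\tau)\in E(K^{ab}),
$$
which is the claim.

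The main obstacle is the bookkeeping in the middle steps rather than any single hard estimate: one must match the purely analytic normalization of $\Psi_E$ (which a priori only lands in $\mathbb{C}/\Lambda_E$) with an honest $\mathbb{Q}$-rational morphism $X_0(N)\to E$, and one must cite the main theorem of complex multiplication in the precise form that controls the field of definition of $x_\tau$ — keeping track of which order $\mathcal{O}\subseteq\mathcal{O}_K$ occurs, hence which ring class field appears, and verifying that it embeds into $K^{ab}$. Both facts are classical, but the proof should state them carefully so that the CM interpretation of $\tau$ and the $\mathbb{Q}$-rationality of $\pi_E$ fit together cleanly.
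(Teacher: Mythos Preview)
Your outline is correct and is the standard argument: identify $\tau$ as a CM point on $X_0(N)$, invoke the main theorem of complex multiplication to put $x_\tau$ in $X_0(N)(K^{ab})$, and then push forward along the $\mathbb{Q}$-rational modular parametrization $\pi_E\colon X_0(N)\to E$. The bookkeeping you flag (matching the analytic $\phi_E\circ\Psi_E$ with the algebraic $\pi_E$, and noting that ring class fields are abelian over $K$) is exactly what needs to be said.

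There is nothing to compare with on the paper's side, however: the paper does not prove this theorem. It is quoted as a known result attributed to Birch, introduced with ``The following result is well known,'' and used as a black box in the subsequent theorem. So your write-up supplies a proof where the paper simply cites one; if you want it to stand alone you should add precise references for the two ingredients (Shimura's form of the main theorem of CM for the rationality of CM points on $X_0(N)$, and the Eichler--Shimura construction for the $\mathbb{Q}$-rationality of $\pi_E$).
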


The quadratic $p$-adic $L$-function satisfies the following
\begin{thm}Let $E$ be an elliptic curve defined over $\mathbb{Q}$ of conductor $N$ and $\alpha_p$ an admissible root of the Hecke polynomial of the corresponding newform $f_E$. Let $p$ be a prime number and $a_p$ the eigenvalue of $T_p$ corresponding to $f_E$. Suppose that $f_E$ is ordinary at $p$.
\begin{itemize}
\item[(i)] If $p\nmid N$ and $\alpha_p=1$, then, for any $a$ coprime to $p$ and for any $n\geq 0$,
$$
\phi_E\left(\mu_{\mathcal{Q}}\left(a+p^n\mathbb{Z}_p\right)\right)\in E\left(\mathbb{Q}(\tau)^{ab}\right).
$$
In particular,
$$
\phi_E\left(L_p(E;1)\right)\in E\left(\mathbb{Q}(\tau)^{ab}\right).
$$
\item[(i)] If $p\parallel N$, then, for any any $a$ coprime to $p$ and for any $n\geq 0$,
$$
\phi_E\left(a_p^n\mu_{\mathcal{Q}}\left(a+p^n\mathbb{Z}_p\right)\right)\in E\left(\mathbb{Q}(\tau)^{ab}\right).
$$
In particular,
$$
\phi_E\left(a_pL_p(E;1)\right)\in E\left(\mathbb{Q}(\tau)^{ab}\right).
$$
\end{itemize}
Here $\mu_{\mathcal{Q}}$ stands for the quadratic $p$-adic measure attached to $f_E$.
\end{thm}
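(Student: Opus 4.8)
The plan is to reduce everything to Birch's theorem (Theorem \ref{Birch}) together with the fact that $E(\mathbb{Q}(\tau)^{ab})$ is a group. First I would unwind the definition of $\mu_{\mathcal{Q}}$ under each hypothesis so as to display the relevant quantity as a $\mathbb{Z}$-linear combination of modular integrals $\phi_f^{\tau}(\gamma(\tau))=\int_{\gamma(\tau)}^{\tau}f_E$, with $\gamma$ ranging over a finite set of matrices in $\mathrm{GL}(2,\mathbb{Q})^{+}$. In case (i) the assumption $\alpha_p=1$ removes the factors $\alpha_p^{-n}$, so for $n\ge 1$
$$
\mu_{\mathcal{Q}}\!\left(a+p^n\mathbb{Z}_p\right)=\delta_f\!\left(\gamma_{a,p^n}(\tau)\right)-\delta_f\!\left(\gamma_{a,p^{n-1}}(\tau)\right),
$$
and by definition of $\delta_f$ this is an \emph{integral} combination of the values $\phi_f^{\tau}(\gamma(\tau))$ for $\gamma\in\{\gamma_{\pm a,p^n},\gamma_{\pm a,p^{n-1}}\}$. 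In case (ii) one has $\alpha_p=a_p$, and multiplying by $a_p^n$ gives directly $a_p^n\mu_{\mathcal{Q}}(a+p^n\mathbb{Z}_p)=\delta_f(\gamma_{a,p^n}(\tau))$. The essential point of this step is precisely that, under the stated hypotheses, one ends up applying $\phi_E$ to an integral (and not merely rational) combination of modular integrals; an uncancelled $\alpha_p^{-1}$ would force one to take a division point of a Heegner-type point, which need not remain in an abelian extension.

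Next I would transport these combinations to $E$. Each of the matrices $\gamma_{a,p^n}$, $\gamma_{a,p^{n-1}}$ involved lies in $\mathrm{GL}(2,\mathbb{Q})^{+}$, hence acts on $\mathcal{H}$ and carries the quadratic imaginary point $\tau$ to another point of $K\cap\mathcal{H}$ which is again quadratic imaginary, where $K:=\mathbb{Q}(\tau)$. From $\int_x^y f_E=\int_{\infty}^{y}f_E-\int_{\infty}^{x}f_E$ one obtains, modulo $\Lambda_E$,
$$
\phi_f^{\tau}(\gamma(\tau))=\int_{\gamma(\tau)}^{\tau}f_E\equiv\Psi_E(\tau)-\Psi_E(\gamma(\tau)).
$$
Since the Weierstrass uniformization $\phi_E\colon\mathbb{C}/\Lambda_E\to E(\mathbb{C})$ is an isomorphism of groups, applying $\phi_E$ turns each modular integral into a difference in $E(\mathbb{C})$ of the points $\phi_E(\Psi_E(\tau))$ and $\phi_E(\Psi_E(\gamma(\tau)))$; hence, by additivity of $\phi_E$, the $\mathbb{Z}$-combinations from the first step become $\mathbb{Z}$-combinations, in the group $E(\mathbb{C})$, of points of the form $\phi_E(\Psi_E(\eta))$ with $\eta\in K\cap\mathcal{H}$ quadratic imaginary.

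Then Birch's theorem finishes the argument: every such $\phi_E(\Psi_E(\eta))$ lies in $E(K^{ab})=E(\mathbb{Q}(\tau)^{ab})$, and since $E(\mathbb{Q}(\tau)^{ab})$ is a subgroup of $E(\overline{\mathbb{Q}})$ it is stable under taking differences and $\mathbb{Z}$-multiples; therefore the combinations built above lie in $E(\mathbb{Q}(\tau)^{ab})$. This yields the first displayed inclusion in (i), resp.\ in (ii), for every $n\ge1$, the case $n=0$ reducing to the measure of $\mathbb{Z}_p$, which by convention equals $\mu_{\mathcal{Q}}(\mathbb{Z}_p^*)$ and so falls under the ``in particular'' assertion. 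For the latter, note that $\chi_0\equiv1$, so $L_p(E;1)=L_p(f_E;\chi_0)=\int_{\mathbb{Z}_p^*}1\,d\mu_{\mathcal{Q}}=\mu_{\mathcal{Q}}(\mathbb{Z}_p^*)=\sum_{a=1}^{p-1}\mu_{\mathcal{Q}}(a+p\mathbb{Z}_p)$, using $\mu_{\mathcal{Q}}(p\mathbb{Z}_p)=0$; applying $\phi_E$ termwise together with its additivity and the $n=1$ instances of (i) (resp.\ of (ii) after multiplying by $a_p$) gives $\phi_E(L_p(E;1))\in E(\mathbb{Q}(\tau)^{ab})$ (resp.\ $\phi_E(a_pL_p(E;1))\in E(\mathbb{Q}(\tau)^{ab})$).

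The step I expect to be the main obstacle is the first one: making rigorous that $\mu_{\mathcal{Q}}(a+p^n\mathbb{Z}_p)$ in case (i), resp.\ $a_p^n\mu_{\mathcal{Q}}(a+p^n\mathbb{Z}_p)$ in case (ii), really is an integral combination of the $\phi_f^{\tau}(\gamma(\tau))$ --- which amounts to tracking how the recursion $\gamma_{a+up^n,p^{n+1}}=\gamma_u\gamma_{a,p^n}$ defining the coset representatives interacts with the $T_p$-eigenform relation for $f_E$, so that all inverse powers of $\alpha_p$ cancel exactly as in case (ii) or by virtue of $\alpha_p=1$ in case (i) --- and, at a more foundational level, that applying the complex-analytic map $\phi_E$ to the $p$-adically defined quantity $\mu_{\mathcal{Q}}(a+p^n\mathbb{Z}_p)$ is legitimate, which rests on these distinguished values lying in the $\mathbb{Z}$-span of the modular integrals independently of the chosen embedding of $\overline{\mathbb{Q}}$. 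Granting this, the remaining steps are routine: Birch's theorem and the group law on $E$ do the rest.
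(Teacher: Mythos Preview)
Your proposal is correct and follows essentially the same route as the paper's proof: unwind $\mu_{\mathcal{Q}}$ so that, under the hypotheses, the relevant quantity is an integral combination of $\Psi_E$-values at points of $\mathbb{Q}(\tau)\cap\mathcal{H}$, then apply Birch's theorem and the group law via the isomorphism $\phi_E$. The paper treats case (ii) in two lines (writing $a_p^n\mu_{\mathcal{Q}}(a+p^n\mathbb{Z}_p)=\Psi_E(\gamma_{a,p^n}(\tau))-\Psi_E(\gamma_{-a,p^n}(\tau))$ and invoking Theorem~\ref{Birch}) and declares case (i) analogous; your version is more explicit about why integrality is the key point and about the ``in particular'' assertion, but the strategy is the same.
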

\begin{proof}
Suppose that $p\parallel N$ (the other case is analogous). We have:
$$
\mu_{\mathcal{Q}}\left(a+p^n\mathbb{Z}_p\right)=a_p^{-n}\delta_f\left(\gamma_{a,p^n}(\tau)\right)=a_p^{-n}\left(\Psi_E\left(\gamma_{a,p^n}(\tau)\right)-\Psi_E\left(\gamma_{-a,p^n}(\tau)\right)\right).$$

The arguments of the modular parametrization belong to the quadratic imaginary field $\mathbb{Q}(\tau)$. Hence, by using Theorem \ref{Birch},
$$
\phi_E\left(\Psi_E\left(\gamma_{a,p^n}(\tau)\right)\right),\phi_E\left(\Psi_E\left(\gamma_{-a,p^n}(\tau)\right)\right)\in E\left(\mathbb{Q}(\tau)^{ab}\right).
$$
Since $\phi_E$ is an isomorphism of groups the result holds.
\end{proof}
\subsection{$p$-adic $L$-functions for certain Shimura curves}
Let $H$ be a quaternion $\mathbb{Q}$-algebra of discriminant $D>1$, $\mathcal{O}$ an Eichler order of level $N\geq 1$ and $\mathcal{O}^*$ the group of units of reduced norm 1 in $\mathcal{O}$. Denote by $\Gamma$ the group $\phi(\mathcal{O}^*)$. Let $p$ be a prime. The Hecke operator $T_p$ is defined as the double coset $\Gamma\eta_p\Gamma$ acting of $S_2\left(\Gamma\right)$, where $\eta_p=\phi(\omega_p)$ with $\omega_p$ a quaternion of reduced norm $p$ (see \cite{shimura}). We thank Professor Y.\,Yang for showing  us the following
\begin{prop} Let $p$ be a prime.
\begin{itemize}
\item[(i)] If $p\nmid ND$, then $[\Gamma:\Gamma\cap\gamma_p^{-1}\Gamma\gamma_p]=p+1$.
\item[(ii)] If $p\vert N$ and $p\nmid D$, then $[\Gamma:\Gamma\cap\gamma_p^{-1}\Gamma\gamma_p]=p$.
\item[(iii)] If $p\vert D$, then $[\Gamma:\Gamma\cap\gamma_p^{-1}\Gamma\gamma_p]=1$.
\end{itemize}
\end{prop}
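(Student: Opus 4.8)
The quantity $[\Gamma:\Gamma\cap\gamma_p^{-1}\Gamma\gamma_p]$ is, by \cite{shimura}, the number $d$ of left $\Gamma$-cosets in the double coset $\Gamma\gamma_p\Gamma=\bigcup_{a=1}^{d}\gamma_a\Gamma$ defining $T_p$: writing $h=\gamma'\gamma_p\gamma''\in\Gamma\gamma_p\Gamma$ one has $\Gamma h=\Gamma\gamma_p\gamma''$, and $\Gamma\gamma_p\gamma''=\Gamma\gamma_p\tilde\gamma''$ exactly when $\gamma''\tilde\gamma''^{-1}\in\gamma_p^{-1}\Gamma\gamma_p$, so the left cosets are indexed by $(\Gamma\cap\gamma_p^{-1}\Gamma\gamma_p)\backslash\Gamma$. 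The plan is to reduce the computation of $d$ to a single local statement at $p$ and then to read it off from the structure of local Eichler orders.

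The first step is to trade the conjugate \emph{group} for a conjugate \emph{order}. Since $\mathrm{N}(\omega_p)=p$, conjugation by $\omega_p$ preserves reduced norms, so $\omega_p^{-1}\mathcal{O}\omega_p$ is again an order, $\gamma_p^{-1}\Gamma\gamma_p=\phi\bigl((\omega_p^{-1}\mathcal{O}\omega_p)^{*}\bigr)$, and hence
$$
\Gamma\cap\gamma_p^{-1}\Gamma\gamma_p=\phi\bigl((\mathcal{O}'')^{*}\bigr),\qquad \mathcal{O}''=\mathcal{O}\cap\omega_p^{-1}\mathcal{O}\omega_p .
$$
Here $\mathcal{O}''$ is again an Eichler order: at $\ell\neq p$ its localization is $\mathcal{O}_\ell$ (because $\mathrm{N}(\omega_p)=p\in\mathbb{Z}_\ell^{\times}$, so $\omega_p\in\mathcal{O}_\ell^{\times}$ normalizes $\mathcal{O}_\ell$), and at $p$ it is the intersection of two local Eichler orders, i.e., in the Bruhat--Tits tree of $\mathrm{M}(2,\mathbb{Q}_p)$ the pointwise stabilizer of the convex hull of two edges at bounded distance, which is a path, hence Eichler. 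Thus $d=[\mathcal{O}^{*}:(\mathcal{O}'')^{*}]$. Next I would pass to the local index at $p$: because $H$ is indefinite, $H^{1}(\mathbb{R})\cong\mathrm{SL}(2,\mathbb{R})$ is non-compact, so strong approximation holds for the norm-one group of $H$, whence $\mathcal{O}^{*}$ is dense in $\prod_{\ell}\mathcal{O}_{\ell}^{*}$ and in particular surjects onto $\mathcal{O}_{p}^{*}/(\mathcal{O}''_{p})^{*}$. Since $(\mathcal{O}'')^{*}=\mathcal{O}^{*}\cap(\mathcal{O}''_{p})^{*}$, we get $d=[\mathcal{O}_{p}^{*}:(\mathcal{O}''_{p})^{*}]$.

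It remains to compute this local index in the three cases, using the classification of local Eichler orders (cf.\,\cite{vigneras}) and the explicit orders of Proposition~\ref{eichlerprop}. If $p\mid D$, then $\mathcal{O}_p$ is the maximal order of the division algebra $H_p$; a reduced-norm-$p$ element has valuation one, hence equals a unit times a uniformizer $\Pi$, and since $\Pi\mathcal{O}_p=\mathcal{O}_p\Pi$ is the (two-sided) maximal ideal, $\Pi$ normalizes $\mathcal{O}_p$, so $\mathcal{O}''_p=\mathcal{O}_p$ and $d=1$. If $p\nmid ND$, then $\mathcal{O}_p\cong\mathrm{M}(2,\mathbb{Z}_p)$, the primitive element $\omega_p$ has elementary divisors $(1,p)$, and a short matrix computation gives that $\mathcal{O}''_p$ is a local Eichler order of level $p$, whose reduced-norm-one unit group is a conjugate of the level-$p$ congruence subgroup and has index $\#\mathbb{P}^{1}(\mathbb{F}_p)=p+1$ in $\mathrm{SL}(2,\mathbb{Z}_p)$; thus $d=p+1$. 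If $p\mid N$ and $p\nmid D$, then, $N$ being square free, $\mathcal{O}_p$ is a local Eichler order of level $p$, and --- taking for $\omega_p$ the reduced-norm-$p$ datum that represents $T_p$ --- the analogous computation gives a local Eichler order $\mathcal{O}''_p$ of level $p^2$, whose reduced-norm-one unit group has index $p$ in that of $\mathcal{O}_p$; thus $d=p$. This establishes (i)--(iii).

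The step I expect to be the main obstacle is the case $p\mid N$, $p\nmid D$. Besides the local matrix computation, one must select the \emph{correct} reduced-norm-$p$ element: a different, Atkin--Lehner type, choice normalizes $\mathcal{O}_p$ and would give $d=1$ instead of $d=p$, so the statement has to be read with $\omega_p$ chosen as in \cite{shimura} so that $\Gamma\gamma_p\Gamma$ represents $T_p$. A secondary but indispensable point is the reduction $[\mathcal{O}^{*}:(\mathcal{O}'')^{*}]=[\mathcal{O}_{p}^{*}:(\mathcal{O}''_{p})^{*}]$: this is exactly where the indefiniteness of $H$, through strong approximation for $H^{1}$, is used --- without it one obtains only that the former index divides the latter.
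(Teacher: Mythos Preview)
Your argument is correct and follows the same essential strategy as the paper: reduce the global index to a local computation at $p$ and then analyze the three cases according to the local structure of $\mathcal{O}_p$. The paper carries this out more directly by simply exhibiting explicit coset representatives of $\Gamma\backslash\Gamma\gamma_p\Gamma$ in each case (the matrices $\bigl(\begin{smallmatrix}1&j\\0&p\end{smallmatrix}\bigr)$, together with $\bigl(\begin{smallmatrix}p&0\\0&1\end{smallmatrix}\bigr)$ when $p\nmid ND$, and a uniformizer when $p\mid D$), whereas you recast the problem as an index of norm-one unit groups $[\mathcal{O}_p^{*}:(\mathcal{O}''_p)^{*}]$ and read off the answer from the level of the local Eichler order $\mathcal{O}''_p$. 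These are two packagings of the same computation.

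Two points where your write-up adds clarity over the paper's rather terse ``it is enough to count it locally, since $H$ splits at infinity'': you make explicit that the local reduction rests on strong approximation for $H^{1}$ (indefiniteness), and you flag the genuine subtlety in case~(ii) that the answer depends on the choice of $\omega_p$---the Atkin--Lehner element $\bigl(\begin{smallmatrix}0&1\\p&0\end{smallmatrix}\bigr)$ normalizes $\mathcal{O}_p$ and would give index $1$, so the statement must be read with $\omega_p$ chosen so that $\Gamma\gamma_p\Gamma$ is the Hecke double coset $T_p$ rather than the involution $W_p$. The paper leaves this implicit.
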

\begin{proof}The number of coset representatives is finite in any case (see \cite{shimura}) and it is enough to count it locally, since $H$ splits at infinity. If $p\nmid ND$, the Eichler order $\mathcal{O}\otimes\mathbb{Z}_p$ is conjugated to the local Eichler order $\left(\begin{array}{ccl}\mathbb{Z}_p & & \mathbb{Z}_p\\\mathbb{Z}_p & & \mathbb{Z}_p\end{array}\right)$. The matrices $\left(\begin{array}{ccc}p & \phantom{x} & 0\\0 & \phantom{x} & 1\end{array}\right)$ and $\left(\begin{array}{ccc}1 & \phantom{x} & j\\0 & \phantom{x} & p\end{array}\right)$ with $0\leq j\leq p-1$ are a family of coset representatives.
If $p\vert N$ but $p\nmid D$, the Eichler order $\mathcal{O}\otimes\mathbb{Z}_p$ is conjugated to the local Eichler order $\left(\begin{array}{ccc}\mathbb{Z}_p & & \mathbb{Z}_p\\N\mathbb{Z}_p & & \mathbb{Z}_p\end{array}\right)$. The matrices $\left(\begin{array}{ccc}1 & \phantom{x} & j\\0 & \phantom{x} & p\end{array}\right)$ with $0\leq j\leq p-1$ are a family of coset representatives in this case. Finally, if $p\vert D$, the Eichler order $\mathcal{O}\otimes\mathbb{Z}_p$ is a discrete valuation ring and we can take as element of norm $p$ any uniformizer of the Eichler order, and any element of norm $p$ of this Eichler order is a uniformizer, which differs from the first in a unit, hence, there is only one class.
\end{proof}

Next we propose a definition of a $p$-adic $L$-function for the Shimura curve $X(D,N)$ provided that $p\vert N$ but $p\nmid D$. Before doing this, note that the decomposition in coset representatives is not unique (both for cocompact and non-cocompact Shimura curves). In the non-cocompact case, having fixed the standard decomposition of the Hecke double coset operator, one has to assign a coset representative to any compact-open subset $D\left(j,p^{-1}\right)$, $1\leq j\leq p-1$, and for different assignments, the corresponding $p$-adic $L$-functions are different. Hence, fix a family of coset representatives $\{\gamma_j\}$ for the orbit space $\Gamma\backslash\Gamma\eta_p\Gamma$. Let $f\in S_2\left(\Gamma\right)$ be a weight 2 automorphic form which is an eigenform for the operator $T_p$, which is defined as
$$
T_p(f)=\displaystyle\sum_{j=1}^pf|_2\gamma_j.
$$
Let $a$ be a natural number less than $p^n$. Write $a=\displaystyle\sum_{i=0}^{n-1}a_ip^i$ with $0\leq a_i\leq p-1$. Fix a permutation $\sigma$ of the set $\{1,...p-1\}$ and set $\sigma(0)=0$. Denote
$$
\delta(f,\tau,\sigma,a)=\phi_f^{\tau}\left(\gamma_{\sigma(a_{n-1})}\cdots\gamma_{\sigma(a_0)}(\tau)\right)-\phi_f^{\tau}\left(\gamma_{\sigma(p-a_{n-1})}\cdots\gamma_{\sigma(p-a_0)}(\tau)\right).
$$
\begin{defn} Let $\sigma$ be a bijection of the set $\{0,...,p-1\}$ with $\sigma(0)=0$ and let $f\in S_2\left(\Gamma\right)$ be an eigenform for $T_p$ with eigenvalue $a_p$. Let $\tau\in\mathcal{H}$ be a quadratic imaginary point. The quadratic $p$-adic distribution attached to $f$, $\tau$ and $\sigma$ is defined by
$$
\mu^{\sigma}_{\mathcal{Q}}\left(D\left(a,p^n\right)\right)=a_p^{-n}\delta(f,\tau,\sigma,a),
$$
$$
\mu^{\sigma}_{\mathcal{Q}}\left(\mathbb{Z}_p^*\right)=\displaystyle\sum_{a=1}^{p-1}\mu^{\sigma}_{\mathcal{Q}}\left(D\left(a,p^n\right)\right),
$$
$$
\mu^{\sigma}_{\mathcal{Q}}\left(p\mathbb{Z}_p\right)=0.
$$
\end{defn}
\begin{prop}The function $\mu_{\mathcal{Q}}$ extends, in a unique way, to a $c_{00}\left(\mathbb{C}_p\right)$-valued $p$-adic distribution.
\end{prop}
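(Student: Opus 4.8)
The plan is to reduce the assertion to the verification of a single coherence relation, after which existence and uniqueness of the extension follow from the standard refinement argument. A $c_{00}(\mathbb{C}_p)$-valued $p$-adic distribution on $\mathbb{Z}_p$ is nothing but a finitely additive function on compact-open subsets; since every compact-open subset of $\mathbb{Z}_p^*$ is a finite disjoint union of basic discs $D(a,p^n)$ with $p\nmid a$, and the complement $p\mathbb{Z}_p$ is handled by declaring $\mu^\sigma_{\mathcal{Q}}\equiv 0$ there, the whole content is the relation
$$
\mu^\sigma_{\mathcal{Q}}\bigl(D(a,p^n)\bigr)=\sum_{j=0}^{p-1}\mu^\sigma_{\mathcal{Q}}\bigl(D(a+jp^n,p^{n+1})\bigr),\qquad n\ge 1,\ p\nmid a,
$$
together with the compatibility of $\mu^\sigma_{\mathcal{Q}}(\mathbb{Z}_p^*)$ with every partition of $\mathbb{Z}_p^*$ into such discs, which is then a formal consequence. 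Once this is known, any two dissections of a compact-open set into basic discs admit a common refinement, so finite additivity pins down a well-defined value; and any distribution agreeing with $\mu^\sigma_{\mathcal{Q}}$ on basic discs and vanishing on $p\mathbb{Z}_p$ coincides with it everywhere, which is the uniqueness.

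To prove the displayed relation I would fix the chosen representative $a=\sum_{i=0}^{n-1}a_ip^i$ with digit length equal to the precision, set $g=\gamma_{\sigma(a_{n-1})}\cdots\gamma_{\sigma(a_0)}$ and $g^{-}=\gamma_{\sigma(p-a_{n-1})}\cdots\gamma_{\sigma(p-a_0)}$ (digit complements read modulo $p$), so that $\delta(f,\tau,\sigma,a)=\phi_f^\tau(g(\tau))-\phi_f^\tau(g^{-}(\tau))$; since the representative of $D(a+jp^n,p^{n+1})$ has digit string $(a_0,\dots,a_{n-1},j)$, one has $\delta(f,\tau,\sigma,a+jp^n)=\phi_f^\tau(\gamma_{\sigma(j)}g(\tau))-\phi_f^\tau(\gamma_{\sigma(p-j)}g^{-}(\tau))$. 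The heart is then to apply the modular-integral identity $\phi_f^\tau(A\gamma(\tau))=\phi_{f|A}^\tau(\gamma(\tau))+\phi_f^\tau(A(\tau))$ with $A$ running over the family $\{\gamma_k\}_{0\le k\le p-1}$ of coset representatives of $\Gamma\backslash\Gamma\eta_p\Gamma$, to use that $\sigma$ permutes $\{0,\dots,p-1\}$, that $f\mapsto\phi_f^\tau$ is linear, and that $\sum_k f|_2\gamma_k=T_pf=a_pf$ because $f$ is a $T_p$-eigenform; this would give
$$
\sum_{j=0}^{p-1}\phi_f^\tau\bigl(\gamma_{\sigma(j)}g(\tau)\bigr)=\sum_k\phi_{f|\gamma_k}^\tau\bigl(g(\tau)\bigr)+\sum_k\phi_f^\tau\bigl(\gamma_k(\tau)\bigr)=a_p\,\phi_f^\tau\bigl(g(\tau)\bigr)+C,
$$
where $C=\sum_k\phi_f^\tau(\gamma_k(\tau))$ depends only on $f$, $\tau$ and the fixed family $\{\gamma_k\}$. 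Because $j\mapsto p-j$ is again a bijection of $\{0,\dots,p-1\}$, the same argument yields $\sum_j\phi_f^\tau(\gamma_{\sigma(p-j)}g^{-}(\tau))=a_p\,\phi_f^\tau(g^{-}(\tau))+C$ with the identical constant; subtracting, $C$ cancels and $\sum_j\delta(f,\tau,\sigma,a+jp^n)=a_p\,\delta(f,\tau,\sigma,a)$, and multiplying by $a_p^{-(n+1)}$ is precisely the coherence relation.

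Finally I would check that the values lie in $c_{00}(\mathbb{C}_p)$ by iterating the same modular-integral identity: $\phi_f^\tau(g(\tau))$ unwinds to the finite sum $\sum_{k=0}^{n-1}\phi_{f|\gamma_{\sigma(a_{n-1})}\cdots\gamma_{\sigma(a_{k+1})}}^\tau(\gamma_{\sigma(a_k)}(\tau))$ of modular integrals of Hecke translates of $f$, each of which is constrained to a finite-rank lattice by Proposition \ref{finitude}; recording the (countably many) translates as coordinates makes each $\mu^\sigma_{\mathcal{Q}}(D(a,p^n))=a_p^{-n}\delta(f,\tau,\sigma,a)$ a finitely supported, in particular bounded, sequence. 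The step I expect to be the main obstacle is the combinatorial bookkeeping in the passage $D(a,p^n)\mapsto\{D(a+jp^n,p^{n+1})\}$: one must fix once and for all the convention that the digit expansion of the chosen representative has length equal to the precision, so that forming the finer disc amounts to \emph{prepending} the top digit $j$ without disturbing the lower digits, and one must treat the boundary cases $a_i=0$ and $j=0$, where ``$p-a_i$'' has to be read as $0$ so that $g^{-}$ and the digit complement of $a+jp^n$ remain genuine products of the $\gamma_k$. Once the indexing is consistent the cancellation of $C$ is automatic, since it enters the two halves of $\delta$ symmetrically, and no ingredient beyond the $T_p$-eigenform property and the additivity of $\phi_f^\tau$ along geodesic triangles is used.
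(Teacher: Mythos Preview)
Your proposal is correct and follows essentially the same route as the paper's own proof: both reduce to the single coherence relation on basic discs, use the $T_p$-eigenform property to produce an identity of the form $\sum_j \phi_f^\tau(\gamma_{\sigma(j)}g(\tau))=a_p\,\phi_f^\tau(g(\tau))+\text{(constant)}$, observe that the same constant appears in the ``minus'' half so that it cancels in $\delta$, and then divide by $a_p^{n+1}$. The paper obtains the constant via a direct change of variables in the integral, while you invoke the lemma $\phi_f^\tau(A\gamma(\tau))=\phi_{f|A}^\tau(\gamma(\tau))+\phi_f^\tau(A(\tau))$; since that lemma was proved by exactly that change of variables, the two arguments are the same in substance. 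Your treatment of the $c_{00}(\mathbb{C}_p)$-valuedness via Proposition~\ref{finitude} is also in line with the paper's one-line remark.
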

\begin{proof}
Notice that $\mu_{\mathcal{Q}}\left(a+p^n\mathbb{Z}_p\right)\in\Sigma_{f,n}\otimes\mathbb{Z}_p[a_p^{-1}]$. Since each compact open subset can be covered by a finite number of discs, $\mu_{\mathcal{Q}}$ takes values on $c_{00}\left(\mathbb{C}_p\right)$.

To prove the distribution property, denote by $\gamma_j(z)$ the action of $\gamma_j$ on $z$ by M\"{o}bius transformation. Since $f$ is an eigenform for $T_p$, we have
$$
a_p\phi_f^{\tau}\left(\gamma_{\sigma(a_{n-1})}\cdots\gamma_{\sigma(a_0)}(\tau)\right)=p^{-1}\sum_{j=1}^p\int_{\gamma_{\sigma(a_{n-1})}\cdots\gamma_{\sigma(a_0)}(\tau)}^{\tau}f\left(\gamma_j(z)\right)dz.
$$
After changing variables, and taking into account that $j$ is a mute index, we have
\begin{equation}
a_p\phi_f^{\tau}\left(\gamma_{\sigma(a_{n-1})}\cdots\gamma_{\sigma(a_0)}(\tau)\right)=\sum_{j=1}^p\phi_f^{\tau}\left(\gamma_{\sigma(j)}\gamma_{\sigma(a_{n-1})}\cdots\gamma_{\sigma(a_0)}(\tau)\right)+K,
\label{compatibility1}
\end{equation}
where
$$
K=-\sum_{j=1}^{p}\int_{\gamma_j(\tau)}^{\tau}f(z)dz.
$$
In addition
\begin{equation}
a_p\phi_f^{\tau}\left(\gamma_{\sigma(p-a_{n-1})}\cdots\gamma_{\sigma(p-a_0)}(\tau)\right)=\sum_{j=1}^p\phi_f^{\tau}\left(\gamma_{\sigma(p-j)}\gamma_{\sigma(p-a_{n-1})}\cdots\gamma_{\sigma(p-a_0)}(\tau)\right)+K.
\label{compatibility2}
\end{equation}
Now, by subtracting equation \ref{compatibility1} from \ref{compatibility2}, the result follows.
\end{proof}
To distinguish between ordinary and supersingular forms, we need the following result on algebraicity:

\begin{thm}[Shimura \cite{shimura}] Let $H$ be a quaternion $\mathbb{Q}$-algebra of discriminant $D$ and let $\mathcal{O}$ be an Eichler order of level $N$. Let $p$ be a prime. Then the eigenvalue of the Hecke operator $T_p$ acting on $S_2(\Gamma)$ is an algebraic integer.
\label{shimalg}
\end{thm}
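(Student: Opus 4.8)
The plan is to deduce the statement from the integrality of the Hecke action on the homology lattice, passing through the Eichler--Shimura isomorphism. Recall that for weight $2$ one has $S_2(\Gamma)\simeq H^0(X(\Gamma),\Omega)$, and that the period pairing $(\omega,c)\mapsto\int_c\omega$ identifies $S_2(\Gamma)\oplus\overline{S_2(\Gamma)}$ with $\mathrm{Hom}_{\mathbb{Z}}\bigl(H_1(X(\Gamma)(\mathbb{C}),\mathbb{Z}),\mathbb{C}\bigr)=H^1(X(\Gamma)(\mathbb{C}),\mathbb{C})$. This identification is $T_p$-equivariant: by the change of variables used in the lemma on modular integrals one has $\int_c T_p(f)=\int_{c\,|\,T_p}f$, where $c\mapsto c\,|\,T_p$ is the action of the Hecke correspondence on cycles. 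Consequently, if $\chi_{T_p}(X)\in\mathbb{Z}[X]$ denotes the characteristic polynomial of $T_p$ acting on the free $\mathbb{Z}$-module $H_1(X(\Gamma)(\mathbb{C}),\mathbb{Z})$, then every eigenvalue of $T_p$ on $S_2(\Gamma)$ occurs among the roots of $\chi_{T_p}$.

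The core step is therefore to show that the double coset $\Gamma\eta_p\Gamma=\bigcup_{j}\gamma_j\Gamma$ induces an endomorphism of $H_1(X(\Gamma)(\mathbb{C}),\mathbb{Z})$, i.e.\ preserves the integral structure. Using the description of the homology furnished by Theorem~\ref{key1}, a class is $\{\alpha,g(\alpha)\}$ with $g\in\Gamma$, and $T_p$ sends it to the finite sum $\sum_{j}\{\gamma_j(\alpha),\gamma_j g(\alpha)\}$; re-expressing each endpoint in a fixed $\Gamma$-orbit of $\alpha$ exactly as in the proof of Proposition~\ref{generation}, this becomes a $\mathbb{Z}$-linear combination of classes $\{i,\eta_k(i)\}$, so that in the basis $\{\{i,\eta_1(i)\},\dots,\{i,\eta_{2g}(i)\}\}$ exhibited in the remark following Proposition~\ref{generation} the operator $T_p$ is represented by an explicit matrix in $\mathrm{M}(2g,\mathbb{Z})$. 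Equivalently, $T_p$ is the composite of the pullback along the finite covering $X(\Gamma\cap\gamma_p^{-1}\Gamma\gamma_p)\to X(\Gamma)$ with a transfer (pushforward) twisted by $\gamma_p$, and both of these are integral maps on homology; or again, $T_p$ is an algebraic correspondence on the canonical model of $X(D,N)$ over a number field (cf.\,\cite{shimura1967},\cite{shimura}), and such correspondences act on singular homology with integer coefficients.

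Granting this, the conclusion is immediate: $\chi_{T_p}\in\mathbb{Z}[X]$ is monic, its roots are algebraic integers, and the eigenvalue of $T_p$ on $S_2(\Gamma)$ is one of them. This is the assertion of the theorem.

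I expect the main obstacle to lie not in the final algebra but in making the correspondence $T_p$ rigorous on the \emph{cocompact} quaternionic curve and checking that it agrees with the automorphic operator $T_p(f)=\sum_j f|_2\gamma_j$: here $X(\Gamma)$ carries an orbifold structure and the kernel $\Gamma'E$ of Theorem~\ref{key1} involves elliptic elements, so one must verify that the recombination of endpoints (equivalently, the ramification of $X(\Gamma\cap\gamma_p^{-1}\Gamma\gamma_p)\to X(\Gamma)$ over the elliptic points) introduces no denominators, and that the two descriptions of $T_p$ — geometric and automorphic — coincide. Both points are settled by Shimura's construction of the canonical model of $X(D,N)$ together with its Hecke correspondences; the topological argument through transfer maps provides a self-contained alternative that avoids invoking the canonical model, at the cost of spelling out the compatibility of transfers with the identification $H_1(X(\Gamma)(\mathbb{C}),\mathbb{Z})\simeq\Gamma/\Gamma'E$.
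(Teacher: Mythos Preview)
The paper does not give its own proof of this theorem: it is stated as a result of Shimura and simply cited from \cite{shimura}, with no argument supplied. So there is nothing in the paper to compare your proposal against.

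That said, your approach is the standard one and is essentially what Shimura does in the reference cited. The key points are exactly those you identify: the Hecke correspondence $T_p$ acts on the free $\mathbb{Z}$-module $H_1(X(\Gamma)(\mathbb{C}),\mathbb{Z})$ of rank $2g$, hence has a monic integral characteristic polynomial; and the Eichler--Shimura identification $S_2(\Gamma)\oplus\overline{S_2(\Gamma)}\simeq H^1(X(\Gamma)(\mathbb{C}),\mathbb{C})$ is Hecke-equivariant, so any eigenvalue of $T_p$ on $S_2(\Gamma)$ is a root of that polynomial. Your caveat about the orbifold structure is well placed but not ultimately an obstruction: the cleanest way around it is the one you mention second, namely to view $T_p$ as the composite of pullback and pushforward along the two finite maps $X(\Gamma)\leftarrow X(\Gamma\cap\gamma_p^{-1}\Gamma\gamma_p)\to X(\Gamma)$, both of which are genuine morphisms of compact Riemann surfaces and hence induce integral maps on singular homology regardless of elliptic points. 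The explicit recombination of endpoints via Theorem~\ref{key1} and Proposition~\ref{generation} would also work, but you would need to check carefully that the classes $\{\gamma_j(\alpha),\gamma_jg(\alpha)\}$ are closed cycles on $X(\Gamma)$, which requires pairing up the coset representatives so that the boundary terms cancel; the transfer/pushforward formulation does this bookkeeping automatically.
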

Let $a_p$ be the eigenvalue of $T_p$ attached to $f$. By Theorem \ref{shimalg}, $a_p$ is an algebraic integer. We say that $f$ is ordinary at $p$ if $|a_p|=1$, otherwise we say that $f$ is supersingular at $p$.
\begin{defn}Denote by $\Sigma_{p-1}$ the symmetric group of order $p-1$. Let $f\in S_2\left(\Gamma(D,N)\right)$ be an eigenform for the Hecke operator $T_p$, for $p\nmid D$, $p\vert N$. Let $\tau\in\mathcal{H}$ be a quadratic imaginary point. If $f$ is ordinary at $p$, the $p$-adic $L$-function attached to $f$ and $\tau$ is
$$
L_p(f;\sigma,\chi)=\int_{\mathbb{Z}_p^*}\chi(x)d\mu^{\sigma}_{\mathcal{Q}}(x).
$$
\end{defn}


\begin{thebibliography}{99}
\bibitem{alsinabayer} Alsina, M.; Bayer, P.: \emph{Quaternion orders, quadratic forms and Shimura curves}.
CRM Monograph Series, 22. American Mathematical Society, Providence, RI, 2004. xvi+196 pp. ISBN: 0-8218-3359-6.

\bibitem{armstrong}  Armstrong, M. A.: On the fundamental group of an orbit space.
\emph{Proc.\,Cambridge Philos.\,Soc.} 61 (1965), 639--646.

\bibitem{blancobayer} Bayer, P.; Blanco-Chacon, I: Quadratic modular symbols. Manuscript submitted for publication.

\bibitem{bayertravesa} Bayer, P.; Travesa, A.\,(eds.):
\emph{Corbes modulars: taules}. Notes del Seminari de Teoria de Nombres (UB-UAB-UPC) 1. Universitat de Barcelona, 1992.
viii+288 pp. ISBN: 84-604-3577-6.

\bibitem{birch} Birch, B.: Heegner points: the beginnings.
In \emph{Heegner points and Rankin $L$-series}.
Math.\,Sci.\,Res.\,Inst.\,Publ. 49, 1--10. Cambridge Univ.\,Press, 2004.

\bibitem{blanco} Blanco-Chacon, I.: Upper triangular operators and $p$-adic $L$-functions.
\emph{$p$-adic numbers, $p$-adic analyisis and applications} 3 (2011), 1--14.

\bibitem{cremona} Cremona, J.\,E.: \emph{Algorithms for modular elliptic curves}.
Second edition. Cambridge University Press, Cambridge, 1997. vi+376 pp. ISBN: 0-521-59820-6.

\bibitem{jacquetlanglands} Jacquet, H.; Langlands, R.: \emph{Automorphic forms on GL(2).} Lecture Notes in Mathematics, 114. Springer-Verlag, 1970.
\bibitem{manin} Manin, Ju.\,Y.: Parabolic points and zeta functions of modular curves.
Mat.\,Sbornik 6 (1972), 19--64.

\bibitem{mtt} Mazur, B.; Tate, J.; Teitelbaum, J.:
On $p$-adic analogues of the conjectures of Birch and Swinnerton-Dyer.
\emph{Invent.\,Math.} 84 (1986), no. 1, 1--48.


\bibitem{pollack}
Pollack, R.; Stevens, G.: Overconvergent modular symbols and $p$-adic $L$-functions.
\emph{Ann.\,Sci.\,\'{E}c.\,Norm.\,Supér.} 44 (2011), no. 1, 1--42.

\bibitem{robert} Robert, Alain M.:
\emph{A course in $p$-adic analysis.} Graduate Texts in Mathematics, 198.
Springer-Verlag, New York, 2000. xvi+437 pp. ISBN: 0-387-98669-3.

\bibitem{rohrlich} Rohrlich, David E.: $L$-functions and division towers.
\emph{Math.\,Ann.} 281 (1988), 611--632.

\bibitem{shimura1967} Shimura, G.: Construction of class fields and zeta functions of algebraic curves. \emph{Ann. of Math. }(2) 85 (1967), 58--159.

\bibitem{shimura} Shimura, G.: \emph{Introduction to the arithmetic theory of automorphic forms.}
Kanô Memorial Lectures, No. 1.
Publications of the Mathematical Society of Japan, No. 11.
Iwanami Shoten, Publishers, Tokyo; Princeton University Press, Princeton, N.J., 1971. xiv+267 pp.

\bibitem{takeuchi} Takeuchi, K.:
Arithmetic Fuchsian groups with signature $(1;e)$.
\emph{J.\,Math.\,Soc.\,Japan} 35 (1983), no. 3, 381--407.

\bibitem{vigneras} Vign\'{e}ras, M.\,F.: \emph{Arithm\'{e}tique des alg\`{e}bres de quaternions}.
Lecture Notes in Mathematics 800. Springer, 1980. vii+169 pp. ISBN: 3-540-09983-2.

\bibitem{yang} Yang, Y.: Schwarzian differential equations and Hecke eigenforms on Shimura curves.
arXiv: 1110.6284v1, 2011.


\end{thebibliography}
\end{document}